\newtheorem{dummy}{Dummy}[section]              %I created a dummy variable, so that all are numbered consecutively within the chapter
\Crefname{proposition}{Proposition}{Propositions}
\newtheorem{lemma}[dummy]{Lemma}
\Crefname{lemma}{Lemma}{Lemmas}
\newtheorem{theorem}[dummy]{Theorem}
\Crefname{theorem}{Theorem}{Theorems}
\theoremstyle{definition}
\newtheorem{remark}[dummy]{Remark}
\newtheorem{assumption}{Assumption}
\Crefname{assumption}{Assumption}{Assumptions}
\newcommand{\e}[1][]{\ensuremath{\mathds{E}_{#1}}\xspace} %for the expectation
\newcommand{\pr}[1][]{\ensuremath{\mathds{P}_{#1}}\xspace} %for the probability
\newcommand{\abs}[1][\cdot]{\ensuremath{\left| {#1} \right|}\xspace}        %for the absolute value
\newcommand{\distance}[1][]{\ensuremath{d(#1)}\xspace}   %for the distance in the Skorokhod space
\newcommand{\supnorm}[1][\cdot]{\ensuremath{ \| #1 \| }\xspace}        %for the supnorm distance
\newcommand{\borel}{\ensuremath{ \mathcal{B} }\xspace} %for the Borel sigma-algebra on R
\newcommand{\realborel}{\ensuremath{ \borel(\reals) }\xspace} %for the Borel sigma-algebra on R
\newcommand{\borelset}{\ensuremath{ B }\xspace} %for a random Borel set
\newcommand{\vect}[1]{\ensuremath{\boldsymbol{{#1}} }\xspace} %for a vector
\newcommand{\indicatorfunction}[1][]{\ensuremath{\mathds{1}_{#1}}\xspace} %for the indicator function
\newcommand{\uniform}[1][i]{\ensuremath{U_{#1}}\xspace} %for the Uniform r.v
\newcommand{\smallnumber}{\ensuremath{\delta}\xspace} %for a small positive number
\newcommand{\closeness}[2][\smallnumber]{\ensuremath{\prescript{}{#1}{#2}}\xspace} %left index on a set
\newcommand{\uniformdistribution}{\ensuremath{\mathcal{U}}\xspace} %for the Uniform distribution
\newcommand{\naturals}[1][]{\ensuremath{\mathds{N}_{#1}}\xspace} %for the set of the first n natural numbers
\newcommand{\reals}[1][]{\ensuremath{\mathds{R}^{#1}}\xspace} %for the set of real numbers
\newcommand{\skorokhodspace}[1][]{\ensuremath{\mathds{D}^{#1}}\xspace} %for the Skorokhod space
\newcommand{\skorokhodtopology}{\ensuremath{J_1}\xspace} %for the Skorokhod topology
\newcommand{\increasingstepset}{\ensuremath{\mathds{D}^{\uparrow}_{\mathcal{S}}}\xspace} %for the set of all non-decreasing step functions vanishing at the origin
\newcommand{\bijectionset}{\ensuremath{\Lambda}\xspace} %for the set of all increasing, continuous bijections from [0,1] to itself
\newcommand{\claimsizesymbol}{\ensuremath{X}\xspace} %for the symbol of claim sizes
\newcommand{\claim}[1][i]{\ensuremath{\claimsizesymbol_{#1}}\xspace} %for the claim sizes
\newcommand{\claimsizedissymbol}{\ensuremath{F}\xspace} %for the symbol of claim size distribution
\newcommand{\claimsizedis}[1][x]{\ensuremath{\claimsizedissymbol(#1)}\xspace} %for the claim size distribution
\newcommand{\comclaimsizedis}[1][x]{\ensuremath{\bar{\claimsizedissymbol}(#1)}\xspace} %for the complementary claim size distribution
\newcommand{\poissonprocess}[1][t]{\ensuremath{N(#1)}\xspace} %for the Poisson process
\newcommand{\poissonrate}{\ensuremath{\lambda}\xspace} %for the Poisson arrival rate
\newcommand{\orderedclaim}[2][i]{\ensuremath{\claimsizesymbol^\star_{#1,\poissonprocess[#2]}}\xspace} %for the ordered claim sizes
\newcommand{\aggregateclaims}[1][t]{\ensuremath{S(#1)}\xspace} %for the aggregate claims
\newcommand{\scaledaggregateclaims}[2][t]{\ensuremath{\bar{S}_{#2}(#1)}\xspace} %for the scaled aggregate claims
\newcommand{\scaledaggregateclaimssequence}[1][n]{\ensuremath{\bar{S}_{#1}}\xspace} %for the sequence of the scaled and centred aggregate claims
\newcommand{\cedentpremium}{\ensuremath{p_D}\xspace} %for the deductible premium of the cedent
\newcommand{\reinsurerpremium}{\ensuremath{p_R}\xspace} %for the deductible premium of the reinsurer
\newcommand{\premium}{\ensuremath{p}\xspace} %for the deductible premium that the policy holder pays
\newcommand{\initialcapital}{\ensuremath{u}\xspace} %for the initial capital
\newcommand{\finiteruinprobability}[1][T]{\ensuremath{\psi(\initialcapital,#1)}\xspace} %for the finite time ruin probability
\newcommand{\asymptotfiniteruinprobability}[1][\crosslevel]{\ensuremath{\psi(n #1,n)}\xspace} %for the finite time ruin probability
\newcommand{\randomset}{\ensuremath{A}\xspace} %for a random set
\newcommand{\openset}[1][\randomset]{\ensuremath{{#1}^\circ}\xspace} %for a random open set
\newcommand{\closedset}[1][\randomset]{\ensuremath{\bar{#1}}\xspace} %for a random closed set
\newcommand{\boundaryset}[1][\randomset]{\ensuremath{\partial{#1}}\xspace} %the boundary of a random set
\newcommand{\powerindex}{\ensuremath{\alpha}\xspace} %for the tail index of the regularly varying process
\newcommand{\ratefunction}[1][\randomset]{\ensuremath{\mathcal{J}(#1) }\xspace} %for the rate function
\newcommand{\stepfunctionsymbol}{\ensuremath{\xi}\xspace} %for a random step function (symbol)
\newcommand{\stepfunction}[1][t]{\ensuremath{\stepfunctionsymbol(#1)}\xspace} %for a random step function
\newcommand{\discontinuities}[1][\stepfunctionsymbol]{\ensuremath{\mathcal{D}_+(#1)}\xspace} %for the number of discontinuities of the step function
\newcommand{\discontinuitiesset}[1][\stepfunctionsymbol]{\ensuremath{\mathcal{D}(#1)}\xspace} %for the set of discontinuities of the step function
\newcommand{\skorokhodfunctionsymbol}{\ensuremath{\zeta}\xspace} %for a random function (symbol) in the Skorokhod space
\newcommand{\homeomorphismsymbol}{\ensuremath{h}\xspace} %for a random homeomorphism (symbol)
\newcommand{\identitymapping}{\ensuremath{id}\xspace} %for the identity mapping
\newcommand{\levydrift}{\ensuremath{c}\xspace} %for the Levy drift
\newcommand{\crosslevel}{\ensuremath{a}\xspace} %for the level to cross
\newcommand{\lcrhittingsetsymbol}{\ensuremath{A}\xspace} %for the hitting set in LCR
\newcommand{\lcrhittingset}[1][\threshold]{\ensuremath{\lcrhittingsetsymbol^{#1}_{\levydrift,\crosslevel}}\xspace} %for the hitting set in LCR
\newcommand{\ecomorhittingsetsymbol}{\ensuremath{\mathcal{A}}\xspace} %for the hitting set in ECOMOR
\newcommand{\ecomorhittingset}[1][\threshold]{\ensuremath{\ecomorhittingsetsymbol^{#1}_{\levydrift,\crosslevel}}\xspace} %for the hitting set in ECOMOR
\newcommand{\preconstant}[1][\randomset]{\ensuremath{ C_{\threshold+1} (#1) }\xspace} %for the pre-constant
\newcommand{\fixedpreconstant}{\ensuremath{ \mathcal{C}_{\threshold+1}}\xspace} %for the fixed pre-constant
\newcommand{\LCRpreconstant}{\ensuremath{ \mathcal{C}_{\threshold+1}^{L} }\xspace} %for the LCR pre-constant
\newcommand{\ECOMORpreconstant}{\ensuremath{ \mathcal{C}_{\threshold+1}^{E} }\xspace} %for the LCR pre-constant
\newcommand{\lowerconstant}[1][\randomset]{\ensuremath{ C_{\ratefunction[#1]} (\openset[#1]) }\xspace} %for the lower constant in general
\newcommand{\upperconstant}[1][\randomset]{\ensuremath{ C_{\ratefunction[#1]} (\closedset[#1]) }\xspace} %for the lower constant in general
\newcommand{\limitconstant}[1][\randomset]{\ensuremath{ C_{\ratefunction[#1]} (#1) }\xspace} %for the limit constant in general
\newcommand{\measureconstant}[1][j]{\ensuremath{ C_{#1} }\xspace} %for the constant measure in general
\newcommand{\threshold}{\ensuremath{r}\xspace} %for the threshold value of the largest claim sizes
\newcommand{\LCR}[2][t]{\ensuremath{L_{#2}(#1)}\xspace} %for the LCR treaty
\newcommand{\ECOMOR}[2][t]{\ensuremath{E_{#2}(#1)}\xspace} %for the ECOMOR treaty
\newcommand{\reinsurance}[1][t]{\ensuremath{R(#1)}\xspace} %for the reinsurance
\newcommand{\mappingsymbol}{\ensuremath{\phi}\xspace} %for the continuous mapping (symbol)
\newcommand{\lcrmappingsymbol}{\ensuremath{\phi_\threshold}\xspace} %for the continuous mapping (symbol) in the LCR treaty
\newcommand{\ecomormappingsymbol}{\ensuremath{\varphi_\threshold}\xspace} %for the continuous mapping (symbol) in the ECOMOR treaty
\newcommand{\lcrmapping}[1][\stepfunctionsymbol]{\ensuremath{\lcrmappingsymbol(#1)}\xspace} %for the continuous mapping in the LCR treaty
\newcommand{\ecomormapping}[1][\stepfunctionsymbol]{\ensuremath{\ecomormappingsymbol(#1)}\xspace} %for the continuous mapping in the ECOMOR treaty
\newcommand{\lcrlipschitz}{\ensuremath{K}\xspace} %for the Lipschitz constant in LCR
\newcommand{\ecomorlipschitz}{\ensuremath{L}\xspace} %for the Lipschitz constant in ECOMOR
\newcommand{\summaxjump}[3][t]{\ensuremath{{\mathfrak J}^{#2}_{#3}(#1)}\xspace} %for the j-th maximum jump of a function or process
\newcommand{\maxjumpsymbol}{\ensuremath{x} \xspace} %for the max jump (symbol)
\newcommand{\maxjump}[1][i]{\ensuremath{\maxjumpsymbol_{#1}} \xspace} %for the max jump
\newcommand{\maxjumptimesymbol}{\ensuremath{u} \xspace} %for the time (symbol) of the max jump
\newcommand{\maxjumptime}[1][i]{\ensuremath{\maxjumptimesymbol_{#1}} \xspace} %for the time of the max jump
\newcommand{\recursive}[1][n]{\ensuremath{\mathcal{I}_{#1}} \xspace} %for the recursive integrals
\newcommand{\levyprocess}[1][t]{\ensuremath{Y(#1)}\xspace} %for the Levy process
\newcommand{\levymeasure}{\ensuremath{\nu}\xspace} %for the Levy measure
\newcommand{\measurepowerexponent}[2][\powerindex]{\ensuremath{\levymeasure^{#2}_{#1}}\xspace} %for the power tail of the Levy measure
\newcommand{\slowlyvarying}[1][x]{\ensuremath{L(#1)}\xspace} %for the slowly varying function
\newcommand{\scaledlevyprocess}[2][t]{\ensuremath{\bar{Y}_{#2}(#1)}\xspace} %for the scaled Levy process
\newcommand{\discretescaledlevyprocess}[1][n]{\ensuremath{\bar{Y}_{#1}}\xspace} %for the scaled Levy sequence
\newcommand{\realdecreasing}[1][j]{\ensuremath{ \reals[#1 \downarrow]_+ }\xspace} %for all the decreasing sequences
\newcommand{\coupledset}[1][\threshold+1]{\ensuremath{ S_{#1}}\xspace} %for all the decreasing sequences
\newcommand{\exactsteps}[1][\threshold+1]{\ensuremath{ \skorokhodspace_{#1}}\xspace} %for the step functions with exactly j steps
\newcommand{\atmoststeps}[1][\threshold]{\ensuremath{ \skorokhodspace_{ \leqslant #1}}\xspace} %for the step functions with at most j steps
\newcommand{\stepmappingsymbol}[1][j]{\ensuremath{T_{#1}}\xspace} %for a step mapping (symbol)
\newcommand{\stepmapping}[2][j]{\ensuremath{\stepmappingsymbol[#1](#2)}\xspace} %for a step mapping
\newcommand{\inversestepmappingsymbol}[1][j]{\ensuremath{T^{-1}_{#1}}\xspace} %for an inverse step mapping (symbol)
\newcommand{\inversestepmapping}[2][j]{\ensuremath{\inversestepmappingsymbol[#1]\left(#2\right)}\xspace} %for an inverse step mapping
\newcommand{\insurersafetyloading}{\ensuremath{\theta}\xspace} %for the safety loading of the insurer
\newcommand{\reinsurersafetyloading}{\ensuremath{\eta}\xspace} %for the safety loading of the reinsurer
\newcommand{\footnoteremember}[2]
{
   \newcounter{#1}\footnote{#2}\setcounter{#1}{\value{footnote}}
}
\newcommand{\footnoterecall}[1]
{
   \footnotemark[\value{#1}]
}
\title{Finite-time ruin probabilities under large-claim reinsurance treaties for heavy-tailed claim sizes}
\author{
    Hansj\"{o}rg Albrecher\footnoteremember{UNILSFI}{Department of Actuarial Science, Faculty of Business and Economics and Swiss Finance Institute, University of Lausanne, 1015 Lausanne, Switzerland}\\
    \small \texttt{hansjoerg.albrecher@unil.ch}\\
    \and
    Bohan Chen\footnoteremember{CWI}{Centrum Wiskunde \& Informatica (CWI), P.O. Box 94079, 1090 GB Amsterdam, The Netherlands}\\
    \small \texttt{B.Chen@cwi.nl}\\
    \and
    Eleni Vatamidou\footnoteremember{UNIL}{Department of Actuarial Science, Faculty of Business and Economics, University of Lausanne, 1015 Lausanne, Switzerland}\\
    \small \texttt{eleni.vatamidou@unil.ch}\\
    \and
    Bert Zwart\footnoterecall{CWI}\\
    \small \texttt{Bert.Zwart@cwi.nl}\\
}
\date{ }
\begin{document}

\maketitle

\begin{abstract}
We investigate the probability that an insurance portfolio gets ruined within a finite time period under the assumption that the $r$ largest claims are (partly) reinsured. We show that for regularly varying claim sizes the probability of ruin after reinsurance is also regularly varying in terms of the initial capital, and derive an explicit asymptotic expression for the latter. We establish this result by leveraging recent developments on sample\-/path large deviations for heavy tails. Our results allow, on the asymptotic level, for an explicit comparison between two well\-/known large\-/claim reinsurance contracts, namely LCR and ECOMOR. We finally assess the accuracy of the resulting approximations using state\-/of\-/the\-/art rare event simulation techniques.

\end{abstract}

%\tableofcontents

%\newpage
\section{Introduction}\label{Section: Introduction}
We consider the following ruin problem of the classical Cram{\'e}r\-/Lundberg model in risk theory; see e.g.\ \cite{asmussen-RP}. Let $\{\claim[1],\claim[2],\dots\}$ be a sequence of i.i.d.\ positive random variables representing successive claim sizes that arrive according to a homogeneous Poisson process \poissonprocess, $t \geq 0$, with rate \poissonrate. Premiums are received continuously at a constant rate $\premium > \poissonrate \e \claim[]$. We assume that there is also a reinsurance agreement in place, where \reinsurance is the reinsured amount at time $t$. More precisely, if $\aggregateclaims = \sum_{i=1}^{\poissonprocess} \claim$ is the aggregate claim amount at time $t$ and \cedentpremium is the remaining premium for the insurer after reinsurance has been purchased, then the aggregate loss minus premiums at time $t$ for the insurer is equal to $\aggregateclaims - \cedentpremium t - \reinsurance$. If $\initialcapital \geq 0$ is the initial capital, then the probability of ruin before time $T$ is defined as
\begin{equation}\label{Eq. Definition of the ruin probability in the general reinsurance framework}
    \finiteruinprobability = \pr \Bigg( \sup_{0 \leq t \leq T} \{ \aggregateclaims - \cedentpremium t - \reinsurance \} > \initialcapital \Bigg).
\end{equation}

We will restrict our attention to two forms of large claims reinsurance, namely LCR and ECOMOR. In an LCR (largest claim reinsurance) contract (see e.g.\ \cite{ammeter1964rating} for an early reference), the reinsurer agrees to cover the largest \threshold claims, where $\threshold \geq 1$ is a fixed number, while in an ECOMOR (exc\'{e}dent du co\^{u}t moyen relatif) contract \cite{thepaut1950nouvelle}, the reinsurer covers the excess of the \threshold largest claims over the $(\threshold+1)$st largest claim; see \cite{albrecher-RASA} for more details on this type of reinsurance contracts.

We assume that the distribution of the claim sizes belongs to a class of distributions with a regularly varying tail, which is valid for many applications \cite{embrechts-MEE}. It is well known that  the {\it principle of one big jump} holds in the heavy\-/tailed claim setting, i.e.\ ruin is typically caused by a single large claim. However, under the presence of large claim reinsurance contracts, ruin probabilities are typically harder to analyse because the largest claims are covered by the reinsurer and thus multiple claims may be responsible for the event of ruin.

Several papers have studied properties of large claim reinsurance contracts. For example, when claim sizes are light\-/tailed, the asymptotic tail behavior of the reinsured amounts is considered in \cite{hashorva2013ecomor,jiang2008reinsurance} and their joint tail behavior in \cite{peng2014joint}. For asymptotic properties of the reinsured amounts when the claim size distribution is heavy\-/tailed, see \cite{albrecher2014joint,ladoucette2006reinsurance}. For dependence between claim sizes and interarrival times in this context, see \cite{li2015asymptotics}. An interesting recent link between large claim treaties and risk measures is given in \cite{castanomartinez2019family}. However, none of these contributions deal with the ruin probability, which is considered here.

In this paper, we suggest to leverage recent new tools developed in the context of sample\-/path large deviations for heavy\-/tailed stochastic processes for the study of ruin problems under LCR and ECOMOR treaties. Concretely, for a centered L\'{e}vy process \levyprocess, $t \geq 0$, with regularly varying L\'{e}vy measure \levymeasure, sample\-/path large deviations were developed in \cite{rhee2017sample}. Consider the process $\discretescaledlevyprocess = \{ \scaledlevyprocess{n}, t \in [0,1] \}$, where $\scaledlevyprocess{n} = \levyprocess[n t]/n$, $t \geq 0$. Then, asymptotic estimates of $\pr ( \discretescaledlevyprocess \in \randomset )$ for a large collection of sets \randomset were derived. For L\'{e}vy processes with only positive jumps that are regularly varying with index $-\powerindex$, $\powerindex >1$, these results take the form
\begin{equation}\label{Eq. Sample path large deviations theorem}
    \lowerconstant
    \leq \liminf_{n \to \infty} \frac{\pr ( \discretescaledlevyprocess \in \randomset )}{ \big( n\cdot \levymeasure[n,\infty) \big)^{\ratefunction}}
    \leq \limsup_{n \to \infty} \frac{\pr ( \discretescaledlevyprocess \in \randomset )}{ \big( n\cdot \levymeasure[n,\infty) \big)^{\ratefunction}}
    \leq \upperconstant,
\end{equation}
where \openset and \closedset are the interior and closure of \randomset, \ratefunction is interpreted as the minimum number of jumps in the L\'{e}vy process that are needed to cause the event \randomset, and \measureconstant is a measure. We will show how the reinsurance problem fits in the above framework. For this, we resolve several technical challenges such as showing how ruin probabilities in the reinsurance setting can be written as continuous maps of the input process in a suitable Skorokhod space.

Apart from the fact that reinsurance contracts are an interesting object of study in their own right, the present application seems to be the first example for which it is possible to compute the pre\-/factors in the asymptotics \eqref{Eq. Sample path large deviations theorem} explicitly. More precisely, we show for both the LCR and ECOMOR treaty that $\lowerconstant = \upperconstant$ and we provide an explicit expression for this value.

The rest of the paper is organised as follows. In \Cref{Section: Model description and preliminaries}, we provide some preliminary results and introduce the necessary notation. \Cref{Section: Main result} develops the main result, i.e.\ the tail asymptotics for finite time ruin probabilities. For this, we are inquired to write \eqref{Eq. Definition of the ruin probability in the general reinsurance framework} in terms of \eqref{Eq. Sample path large deviations theorem}. This leads to the need to show continuity of certain mappings, as well as several additional technical requirements. In \Cref{Section: Numerical implementations}, we validate our asymptotic results with numerical experiments.
%consider \editi{numerical implementations of our main result}.
% Finally, we conclude in \Cref{Section: Conclusions}.

\section{Model description and preliminaries}\label{Section: Model description and preliminaries}

Following the notation and terminology used in \Cref{Section: Introduction}, let \claimsizedissymbol denote the distribution function of the claim sizes and $\e \claim[]$ be their expectation. We assume that \claimsizedissymbol is regularly varying with index $-\powerindex$, i.e.\ there exists a slowly varying function \slowlyvarying such that $\comclaimsizedis : = 1- \claimsizedis = \slowlyvarying x^{-\powerindex}$, with $\powerindex >1$.
Let further $\orderedclaim[1]{t} \geq \orderedclaim[2]{t} \geq \cdots \geq \orderedclaim[\poissonprocess]{t}$ denote the order statistics of $\claim[1],\claim[2],\dots\claim[\poissonprocess]$.

In an LCR treaty, the reinsured amount \reinsurance is equal to
\begin{equation}\label{Eq. LCR reinsurance}
    \LCR{\threshold} := \sum_{i=1}^\threshold \orderedclaim{t},
\end{equation}
i.e.\ the \threshold largest claims are paid by the reinsurer.
On the other hand, the reinsured amount \reinsurance in an ECOMOR treaty takes the form
\begin{equation}\label{Eq. ECOMOR reinsurance}
    \ECOMOR{\threshold}:= \sum_{i=1}^\threshold \orderedclaim{t} - \threshold \orderedclaim[\threshold+1]{t}
    = \sum_{i=1}^{\poissonprocess} \big( \claim - \orderedclaim[\threshold+1]{t} \big)_+.
\end{equation}

That is, the ECOMOR constitutes an excess\-/of\-/loss treaty with a random retention, and the latter is the $(\threshold+1)$-largest claim. For more details and background on such reinsurance contracts, see \cite{albrecher-RASA}.
In either treaty, the number of reinsured claims is equal to \threshold.

\begin{assumption}\label{Assumption: Zero retention level}
If $\poissonprocess \leq \threshold$, we set $\orderedclaim{t}=0$, for $i= \poissonprocess+1,\dots,\threshold +1$. This means that in case there are less than $\threshold+1$ claims, the reinsurer pays all the claims in the ECOMOR treaty.
\end{assumption}

Another modeling assumption is concerned with the way the reinsurance is affecting the capital position of the insurance company under consideration.

\begin{assumption}\label{Assumption: Immediate payment of claims}
We assume that at each time $t$, the currently applicable reinsured amount \reinsurance is considered in the determination of the available surplus. In particular, this means that before the arrival of the $(\threshold+1)$-st claim, the random retention in the ECOMOR treaty is considered to be zero. As a consequence in the ECOMOR treaty, the arrival of a new claim can lead to a modification of \reinsurance of either sign, as the excess over the $(\threshold+1)$-st claim may also decrease.
\end{assumption}

Note also that the setup we study here is that the duration of the reinsurance contract is $T$, and the implied premium for the reinsurance contract over the period $[0,T]$ is uniformly spread over this time interval. We will study the asymptotic behavior of the finite time ruin probabilities \eqref{Eq. Definition of the ruin probability in the general reinsurance framework} utilizing \eqref{Eq. Sample path large deviations theorem}. Therefore, we formulate in the next section the large deviation problem that arises in our reinsurance context.

%\begin{remark}\label{Remark: Reinsurance premiums}
%Note that the amount \reinsurance is insured against a premium \reinsurerpremium that needs to be paid to the reinsurer. Therefore, if \premium is the premium that the policy holders pay to the insurer, we can calculate the net premium of the former via the formula $\cedentpremium = \premium - \reinsurerpremium$. We assume in our model that the premiums \premium and \reinsurerpremium are determined according to the {\it expected value principle}, i.e.\ $\premium = (1+\insurersafetyloading) \e \aggregateclaims[1] $ and $\reinsurerpremium= (1+\reinsurersafetyloading) \e \reinsurance[1]$, for some safety loadings $\insurersafetyloading, \reinsurersafetyloading>0$ imposed by the insurer and reinsurer, respectively. Moreover, to make sure that reinsurance does not lead to ruin almost surely, the positive safety loading condition $\cedentpremium - \e \aggregateclaims[1] + \e \reinsurance[1]>0$ must hold. Each reinsurance treaty obviously leads to a different \reinsurerpremium and consequently different \cedentpremium. However, we do not address these numerical differences in the forthcoming analysis and we simply accept that \cedentpremium is computed in line with its associated treaty.
%\end{remark}

\subsection{Large deviations in reinsurance}\label{Section: Large deviations in reinsurance}
In \cite{rhee2017sample}, the large deviations results \eqref{Eq. Sample path large deviations theorem} were derived in the Skorokhod \skorokhodtopology topology. Correspondingly, we let $\skorokhodspace = \skorokhodspace ([0,1],\reals)$ be a Skorokhod space, i.e.\ a space of real\-/valued c\`{a}dl\`{a}g (right continuous with left limits) functions on $[0,1]$, equipped with the \skorokhodtopology-metric defined by
\begin{equation}\label{Eq. Definition Skorokhod metric}
    \distance[\stepfunctionsymbol,\skorokhodfunctionsymbol] = \inf_{\homeomorphismsymbol \in \bijectionset} \{ \supnorm[\homeomorphismsymbol - \identitymapping ] \vee \supnorm[\stepfunctionsymbol - \skorokhodfunctionsymbol \circ \homeomorphismsymbol] \}, \qquad (\stepfunctionsymbol,\skorokhodfunctionsymbol) \in \skorokhodspace[2],
\end{equation}
where \bijectionset denotes the set of all strictly increasing continuous bijections from $[0,1]$ to itself, \identitymapping denotes the identity mapping, and \supnorm denotes the uniform (sup) norm on $[0,1]$. Thus, \randomset and \measureconstant in \eqref{Eq. Sample path large deviations theorem} are a measurable set and a measure on \skorokhodspace, respectively. Furthermore, if $\mappingsymbol : \skorokhodspace \to \reals$ is a continuous functional on \skorokhodspace and $\borelset \in \realborel$ is a Borel set such that $\randomset = \mappingsymbol^{-1}(\borelset)$, where $\mappingsymbol^{-1}$ stands for the inverse of \mappingsymbol, it holds that
\begin{equation}\label{Eq. Connection between the probability of the levy and its continuous functional}
    \pr \big( \mappingsymbol (\discretescaledlevyprocess) \in \borelset \big)
    =\pr \big( \discretescaledlevyprocess \in \mappingsymbol^{-1}(\borelset) \big)
    =\pr ( \discretescaledlevyprocess \in \randomset ).
\end{equation}

The above relation portrays how it is possible to use the result \eqref{Eq. Sample path large deviations theorem} to study continuous functionals of \discretescaledlevyprocess. To connect this to our ruin problem, we define $ \scaledaggregateclaimssequence:= \{ \scaledaggregateclaims{n}, t \in [0,1] \}$ as the centred and scaled process
\begin{equation}\label{Eq. Definition of the scaled and centred process}
    \scaledaggregateclaims{n} = \frac1n \aggregateclaims[nt] - \poissonrate \e \claim[] t=
    \frac1n \sum_{i=1}^{\poissonprocess[nt]} \claim - \poissonrate \e \claim[] t, \quad t \geq 0.
\end{equation}
Moreover, we assume that the capital \initialcapital increases linearly in $n$, i.e.\ there exists an $\crosslevel>0$ such that $\initialcapital = n \crosslevel$. We now formulate the large deviations problem to estimate the probabilities
\begin{align}
    &\pr \Bigg( \sup_{t\in[0,1]} \{ \aggregateclaims[nt] - \cedentpremium nt - \reinsurance[nt] \} \geq n \crosslevel \Bigg) \notag \\
    &= \pr \Bigg( \sup_{t\in[0,1]} \{ \aggregateclaims[nt] - \poissonrate \e \claim[] nt- (\cedentpremium - \poissonrate \e \claim[]) nt - \reinsurance[nt] \} \geq n \crosslevel \Bigg) \notag \\
    &=  \pr \Bigg( \sup_{t\in[0,1]} \{ n \scaledaggregateclaims{n} - \levydrift nt -  \reinsurance[nt] \} \geq n\crosslevel \Bigg)
    =  \pr \Bigg( \sup_{t\in[0,1]} \{ \scaledaggregateclaims{n} - \levydrift t -  \frac1n \reinsurance[nt] \} \geq \crosslevel \Bigg), \label{Eq. Large deviations problem}
\end{align}
where $\levydrift = \cedentpremium -  \poissonrate \e \claim[]$\label{parameter c}.
As a next step, we must identify a continuous functional \mappingsymbol such that
\begin{equation}\label{Eq. The continuous functional with the general reinsurance}
    \sup_{t\in[0,1]} \{ \scaledaggregateclaims{n} - \levydrift t -  \frac1n \reinsurance[nt] \}
    = \mappingsymbol (\scaledaggregateclaimssequence),
\end{equation}
so that we can write
\begin{equation}\label{Eq. Probability of the scaled reinsurance}
\pr \Bigg( \sup_{t\in[0,1]} \{ \scaledaggregateclaims{n} - \levydrift t -  \frac1n \reinsurance[nt] \} \geq \crosslevel \Bigg)
= \pr \big( \mappingsymbol (\scaledaggregateclaimssequence) \geq \crosslevel \big)
= \pr \Big( \scaledaggregateclaimssequence \in \mappingsymbol^{-1}\big( [\crosslevel, \infty) \big) \Big).
\end{equation}

However, it is not immediately obvious from \Cref{Eq. The continuous functional with the general reinsurance} what the functional \mappingsymbol looks like because \reinsurance[nt] is not expressed in terms of \scaledaggregateclaimssequence. We focus first on the LCR treaty and observe that
\begin{equation*}
    \frac1n \reinsurance[nt]
    =\frac1n \LCR[nt]{\threshold}
    =\frac1n \sum_{i=1}^\threshold \orderedclaim{nt}
    =\max_{\substack{ (s_1,\dots,s_\threshold)\in[0,t]^\threshold\\s_i\neq s_j,\forall i\neq j}} \sum_{i=1}^\threshold \big( \scaledaggregateclaims[s_i]{n}-\scaledaggregateclaims[s_i^-]{n} \big), \quad t \in [0,1],
\end{equation*}
i.e.\ $\LCR[nt]{\threshold}/n$ can be expressed as the sum of the \threshold biggest jumps of the process \scaledaggregateclaims{n}. For every $\stepfunctionsymbol \in \skorokhodspace$ and $m \in \naturals$, we define
\begin{equation}\label{Eq. Definition of sum of largest claim sizes}
    \summaxjump{m}{\stepfunctionsymbol}
    =
    \sup_{\substack{ (s_1,\dots,s_m)\in[0,t]^m\\s_i\neq s_j,\forall i\neq j}} \sum_{i=1}^m \big( \stepfunction[s_i]-\stepfunction[s_i^-] \big) =
    \max_{\substack{ (s_1,\dots,s_m)\in[0,t]^m\\s_i\neq s_j,\forall i\neq j}} \sum_{i=1}^m \big( \stepfunction[s_i]-\stepfunction[s_i^-] \big), \quad \text{for } t\in(0,1],
\end{equation}
as the supremum of the sum of the $m$ largest jumps of the function \stepfunctionsymbol. Naturally, $\summaxjump[0]{m}{\stepfunctionsymbol} =0$.
Consequently, the functional \mappingsymbol we are looking for is a mapping $\lcrmappingsymbol: \skorokhodspace \to \reals$ defined for every $\stepfunctionsymbol \in \skorokhodspace$ as
\begin{equation}\label{Eq. Mapping LCR}
    \lcrmapping = \sup_{t \in [0,1]} \Big\{ \stepfunction  - \levydrift t  - \summaxjump{\threshold}{\stepfunctionsymbol}\Big\}.
\end{equation}
Moreover, we denote the pre\-/image of $[\crosslevel,\infty)$ under \lcrmappingsymbol as $\lcrhittingset = \lcrmappingsymbol^{-1}\big( [\crosslevel,\infty) \big)$ where
\begin{equation}\label{Eq. Hitting set LCR}
    \lcrhittingset
    = \Bigg\{  \stepfunctionsymbol \in \skorokhodspace: \sup_{t \in [0,1]} \Big\{ \stepfunction  - \levydrift t  - \summaxjump{\threshold}{\stepfunctionsymbol}\Big\} \geq \crosslevel \Bigg\}.
\end{equation}

By comparing \Cref{Eq. LCR reinsurance,Eq. ECOMOR reinsurance}, we observe that the relation between the reinsured amounts of the two treaties is
\begin{equation*}
    \ECOMOR{\threshold}
    = \LCR{\threshold} - \threshold \orderedclaim[\threshold+1]{t}
    = (\threshold + 1)\LCR{\threshold} - \threshold \big( \LCR{\threshold} + \orderedclaim[\threshold+1]{t} \big)
    = (\threshold + 1)\LCR{\threshold} - \threshold \LCR{\threshold+1}.
\end{equation*}
Thus, in the ECOMOR treaty, the functional \mappingsymbol in \eqref{Eq. Probability of the scaled reinsurance} is the mapping $\ecomormappingsymbol: \skorokhodspace \to \reals$ defined for every $\stepfunctionsymbol \in \skorokhodspace$ as
\begin{equation}\label{Eq. Mapping ECOMOR}
    \ecomormapping
    = \sup_{t \in [0,1]} \Big\{ \stepfunction  - \levydrift t  - (\threshold + 1)\summaxjump{\threshold}{\stepfunctionsymbol} + \threshold \summaxjump{\threshold+1}{\stepfunctionsymbol} \Big\},
\end{equation}
while the pre\-/image of $[\crosslevel,\infty)$ under \ecomormappingsymbol, i.e.\ $\ecomorhittingset = \ecomormappingsymbol^{-1}\big( [\crosslevel,\infty) \big)$, is defined as
\begin{equation}\label{Eq. Hitting set ECOMOR}
    \ecomorhittingset
    = \Bigg\{  \stepfunctionsymbol \in \skorokhodspace: \sup_{t \in [0,1]} \Big\{ \stepfunction  - \levydrift t  - (\threshold + 1)\summaxjump{\threshold}{\stepfunctionsymbol} + \threshold \summaxjump{\threshold+1}{\stepfunctionsymbol} \Big\} \geq \crosslevel \Bigg\}.
\end{equation}

\subsection{Preliminaries on the Skorokhod topology and notation}\label{Section: Preliminaries on the Skorokhod topology and notation}
Consider the complete metric space $\big(\skorokhodspace,\distance[,]\big)$. The functional \summaxjump{m}{\stepfunctionsymbol} defined in \eqref{Eq. Definition of sum of largest claim sizes} will play a significant role in the forthcoming analysis. Thus, it is important to confirm that it is well\-/defined. For this reason, let \discontinuitiesset be the set of discontinuities of $\stepfunctionsymbol \in \skorokhodspace$, i.e.
\begin{equation}
    \discontinuitiesset = \{  t \in [0,1]: \stepfunction[t^-] \neq \stepfunction\},
\end{equation}
and let \discontinuitiesset[\stepfunctionsymbol,\epsilon] be the set of discontinuities of magnitude at least $\epsilon$, i.e.
\begin{equation}\label{Eq. definition of set of disc. of magnitude at least epsilon}
    \discontinuitiesset[\stepfunctionsymbol,\epsilon] =
    \{  t \in [0,1]: \abs[{\stepfunction[t^-] - \stepfunction}] \geq \epsilon\}.
\end{equation}
The following result is standard.
\begin{lemma}[Theorem~12.2.1 \& Corollary~12.2.1 of \cite{whitt-SPL}]\label{Lemma: Regularity of the Skorokhod space}
For any $\stepfunctionsymbol \in \skorokhodspace$ and $\epsilon>0$,  \discontinuitiesset[\stepfunctionsymbol,\epsilon] is a finite subset of $[0,1]$.
In particular,  \discontinuitiesset is either finite or countably infinite.
\end{lemma}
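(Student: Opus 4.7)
The plan is to dispatch both assertions by a compactness argument that uses only the defining properties of c\`{a}dl\`{a}g functions; no heavier tool is needed. I would first establish the finiteness of $\discontinuitiesset[\stepfunctionsymbol,\epsilon]$ for a fixed $\epsilon>0$ by contradiction, and then recover the countability of $\discontinuitiesset$ by writing it as a countable union.

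Suppose, for the sake of contradiction, that $\discontinuitiesset[\stepfunctionsymbol,\epsilon]$ contains infinitely many points. Since $[0,1]$ is compact, this set has a limit point $t^{\star}\in[0,1]$, and there exists a sequence $(t_n)\subset \discontinuitiesset[\stepfunctionsymbol,\epsilon]$ with $t_n\neq t^{\star}$ and $t_n\to t^{\star}$. By passing to a subsequence I may assume that $(t_n)$ is either strictly increasing to $t^{\star}$ or strictly decreasing to it. In the first case, the left limit $\stepfunction[t^{\star -}]$ exists, and both $\stepfunction[t_n]\to \stepfunction[t^{\star -}]$ and $\stepfunction[t_n^-]\to \stepfunction[t^{\star -}]$ as $n\to\infty$; hence $\abs[{\stepfunction[t_n]-\stepfunction[t_n^-]}]\to 0$, contradicting $\abs[{\stepfunction[t_n]-\stepfunction[t_n^-]}]\geq \epsilon$. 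In the second case, right\-continuity of $\stepfunctionsymbol$ at $t^{\star}$ gives $\stepfunction[t_n]\to \stepfunction[t^{\star}]$, while $\stepfunction[t_n^-]\to \stepfunction[t^{\star}]$ as well, again yielding the same contradiction. Therefore $\discontinuitiesset[\stepfunctionsymbol,\epsilon]$ must be finite.

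For the second assertion, note that the definition of $\discontinuitiesset$ gives the identity
\begin{equation*}
    \discontinuitiesset \;=\; \bigcup_{k\in\naturals} \discontinuitiesset[\stepfunctionsymbol,1/k],
\end{equation*}
since any genuine discontinuity has strictly positive jump size. The right\-hand side is a countable union of finite sets by the first part, so $\discontinuitiesset$ is at most countable.

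The only step I expect to need a little care is the convergence $\stepfunction[t_n^-]\to \stepfunction[t^{\star -}]$ (resp.\ $\to \stepfunction[t^{\star}]$) along a sequence of \emph{left limits}, as opposed to function values. This is handled by choosing, for each $n$, an auxiliary point $s_n$ just to the left of $t_n$ with $\abs[{\stepfunction[s_n]-\stepfunction[t_n^-]}]<1/n$; then $s_n$ has the same monotone convergence to $t^{\star}$ as $t_n$, so the c\`{a}dl\`{a}g property applied to $(\stepfunction[s_n])$ yields the required limit. Beyond this minor technicality, the argument is entirely elementary.
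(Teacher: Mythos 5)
The paper does not actually prove this lemma; it records it as standard and cites Whitt (Theorem~12.2.1 and Corollary~12.2.1), so there is no in-paper argument to compare against. Your compactness argument is the classical textbook proof and it is correct. You identify the genuinely delicate step yourself — namely that one needs $\stepfunction[t_n^-]\to\stepfunction[t^{\star-}]$ (increasing case) or $\stepfunction[t_n^-]\to\stepfunction[t^{\star}]$ (decreasing case) along a sequence of \emph{left limits} rather than function values — and the auxiliary-point device handles it cleanly, provided you pick $s_n$ carefully: in the increasing case $s_n\in(\max\{t_n-1/n,\,t_{n-1}\},\,t_n)$ so that $s_n\uparrow t^\star$, and in the decreasing case $s_n\in(t^\star,t_n)$ so that right\-/continuity at $t^\star$ applies. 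Two implicit sanity checks are also fine as written: in the increasing case $t^\star>0$ automatically, so the left limit $\stepfunction[t^{\star-}]$ exists, and in the decreasing case $t^\star<1$, so there are indeed points of $[0,1]$ strictly to the right. The countability of \discontinuitiesset via the decomposition $\discontinuitiesset=\bigcup_{k\in\naturals}\discontinuitiesset[\stepfunctionsymbol,1/k]$ is exactly right. In short, your proof is sound and is, in substance, the argument underlying the cited result in Whitt.
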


Consequently, the supremum in \Cref{Eq. Definition of sum of largest claim sizes} is attained because only finitely many jumps can exceed a given positive number. As a result, \summaxjump{m}{\stepfunctionsymbol} is well defined.

Some important subspaces of \skorokhodspace for our analysis are those restricted to step functions. We let \increasingstepset be the set of all non\-/decreasing step functions vanishing at the origin. Furthermore, \exactsteps[j] is the subspace of \skorokhodspace consisting of non\-/decreasing step functions, vanishing at the origin, with exactly $j$ steps, and similarly, $\atmoststeps[j] = \bigcup_{0 \leq i \leq j} \exactsteps[i]$ consists of non\-/decreasing step functions, vanishing at the origin, with at most $j$ steps. Finally, if \discontinuities denotes the number of discontinuities of $\stepfunctionsymbol \in \skorokhodspace$, we can then formally define the integer\-/valued set function \ratefunction appearing in \Cref{Eq. Sample path large deviations theorem} by:
\begin{equation}\label{Eq. Formal definition of the rate function}
    \ratefunction = \inf_{\stepfunctionsymbol \in \randomset \cap \increasingstepset} \discontinuities,
\end{equation}
which we call the rate function. Observe that every $\stepfunctionsymbol \in \exactsteps[j]$ is determined by the pair of jump sizes and jump times $(\vect{\maxjumpsymbol},\vect{\maxjumptimesymbol}) \in \reals[j]_+ \times [0,1]^j$, i.e.\ $\stepfunction =  \sum_{i = 1}^{j} \maxjump \indicatorfunction[\{\maxjumptime,1\}](t)$, where \indicatorfunction[\borelset] is the indicator function on the set \borelset. For $ \vect{\maxjumpsymbol}= (\maxjump[1],\dots,\maxjump[j])$ and $ \vect{\maxjumptimesymbol}= (\maxjumptime[1],\dots,\maxjumptime[j])$, we define the sets
\begin{align}
    \realdecreasing
    &= \{ \vect{\maxjumpsymbol} \in \reals[j]_+ : \maxjump[1] \geq \maxjump[2] \geq \dots  \geq \maxjump[j]>0\},
\intertext{and}
    \coupledset[j]
    &= \{ (\vect{\maxjumpsymbol}, \vect{\maxjumptimesymbol}) \in \realdecreasing \times (0,1)^j:  \maxjumptime[1],\dots,\maxjumptime[j] \text{ are all distinct} \},
\end{align}
where the $\maxjumptime[j]$'s are not following the ordering of the $\maxjump[j]$'s, i.e.\ $\maxjump[k] \geq \maxjump[l] \not \Rightarrow \maxjumptime[k] \geq \maxjumptime[l]$. Thus, we can formally define the mapping $\stepmappingsymbol: \coupledset[j] \to \exactsteps[j]$ by $\stepmapping{\vect{\maxjumpsymbol},\vect{\maxjumptimesymbol}} = \sum_{i = 1}^{j} \maxjump \indicatorfunction[\{\maxjumptime,1\}]$.

Furthermore, let $\measurepowerexponent{}(x,\infty) = x^{-\powerindex}$ (i.e.\ the pure power decay part of the regularly varying claim sizes), and let \measurepowerexponent{j} denote the restriction to \realdecreasing of the $j$\-/fold product measure of \measurepowerexponent{}. We define for each $j \geq 1$ the measure \measureconstant concentrated on \exactsteps[j] as
\begin{equation}\label{Eq. The measure of the constants}
    \measureconstant(\bullet)
    = \e \big[ \measurepowerexponent{j} \{ \vect{y} \in \reals[j]_+ : \sum_{i=1}^j y_i \indicatorfunction[\{\uniform,1\}] \in \bullet \} \big],
\end{equation}
where the random variables \uniform, $i=1,\dots,j$, are i.i.d.\ uniform on $[0,1]$.

Finally, we say that a set $\randomset \subseteq \skorokhodspace$ is bounded away from another set $\borelset \subseteq \skorokhodspace$ if $\inf_{x \in \randomset, y \in \borelset} \distance[x,y]>0$. Additionally, we let $\closeness{\randomset} = \{ \stepfunctionsymbol \in \skorokhodspace: \distance[\stepfunctionsymbol,\randomset] \leq \smallnumber \}$ for any $\smallnumber>0$.

\section{Main result}\label{Section: Main result}
Note that the parameter $\levydrift = \cedentpremium -  \poissonrate \e \claim[]$ introduced in \Cref{Section: Large deviations in reinsurance} can be either positive or negative. However, for $\crosslevel \leq -\levydrift $, the rare event probability in \Cref{Eq. Probability of the scaled reinsurance} converges to one by the functional law of large numbers. For this reason, we focus only on the case $\levydrift+\crosslevel>0$. Letting $\displaystyle \prescript{}{2}{F}_1(b,e;d;z) = \sum_{k=0}^{+\infty} \frac{(b)_k (e)_k}{ (d)_k} \frac{z^k}{k!}$ be the hypergeometric function, with $(b)_k = b (b+1) \dots (b+k-1)$ denoting the Pochhammer symbol, we have the following theorem.

\begin{theorem}\label{Theorem: Asymptotic ruin probability}
For $\crosslevel>0$, $\levydrift+\crosslevel>0$, and $\threshold \in \naturals$, it holds that
\begin{equation}\label{Eq. Expression of main result}
\asymptotfiniteruinprobability
\sim
\fixedpreconstant \big(\poissonrate \slowlyvarying[n] \big)^{\threshold+1} n^{-(\threshold+1)(\powerindex-1)}, \qquad n \to \infty,
\end{equation}
where
\begin{gather*}
    \fixedpreconstant
    = \Bigg[ \crosslevel^{-(\threshold + 1) \powerindex} \prescript{}{2}{F}_1[\threshold+1, (\threshold + 1) \powerindex; \threshold + 2; -\levydrift/\crosslevel] \cdot \indicatorfunction[\{c>0\}]
    + (\crosslevel + \levydrift)^{-(\threshold+1)\powerindex} \cdot \indicatorfunction[\{c<0\}] \Bigg] \times \frac{1}{(\threshold+1)!} \\
    \times
    \begin{cases}
    1, & \text{if } \reinsurance = \LCR{\threshold} \text{ (LCR)}, \\
    (\threshold+1)^{(\threshold+1)\powerindex}, & \text{if } \reinsurance = \ECOMOR{\threshold} \text{ (ECOMOR)}.
    \end{cases}
\end{gather*}
\end{theorem}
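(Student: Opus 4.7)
The plan is to recast $\asymptotfiniteruinprobability$ as $\pr(\scaledaggregateclaimssequence \in \randomset)$ via \eqref{Eq. Large deviations problem}--\eqref{Eq. Probability of the scaled reinsurance} with $\randomset \in \{\lcrhittingset,\ecomorhittingset\}$, and then to apply the sample\-/path large deviations principle \eqref{Eq. Sample path large deviations theorem}. Since \scaledaggregateclaimssequence is the centred version of a compound Poisson process with Lévy measure $\poissonrate\,dF$, one has $n\cdot \levymeasure[n,\infty) = \poissonrate\,\slowlyvarying[n]\, n^{1-\powerindex}$, so the scaling in \eqref{Eq. Expression of main result} will fall out of \eqref{Eq. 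Sample path large deviations theorem} as soon as one shows $\ratefunction[\lcrhittingset] = \ratefunction[\ecomorhittingset] = \threshold+1$ and evaluates $\lowerconstant = \upperconstant$ in closed form.

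For the rate function, note that for every $\stepfunctionsymbol \in \atmoststeps[\threshold]$ all jumps are absorbed by $\summaxjump{\threshold}{\stepfunctionsymbol}$, hence $\stepfunction - \summaxjump{\threshold}{\stepfunctionsymbol}\equiv 0$ and $\lcrmapping = \sup_{t \in[0,1]}(-\levydrift t) \leq \max\{0,-\levydrift\} < \crosslevel$ under the standing assumption $\crosslevel>0$, $\crosslevel+\levydrift>0$. The identity $\stepfunction - (\threshold+1)\summaxjump{\threshold}{\stepfunctionsymbol} + \threshold\summaxjump{\threshold+1}{\stepfunctionsymbol}\equiv 0$ on $\atmoststeps[\threshold]$ yields the same bound for $\ecomormapping$, so \lcrhittingset and \ecomorhittingset miss $\atmoststeps[\threshold]$; on the other hand a step function with $\threshold+1$ sufficiently large equal jumps clustered near $t=0$ lies in both, giving $\ratefunction = \threshold+1$. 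To actually invoke \eqref{Eq. Sample path large deviations theorem}, I would further verify that the hitting sets are \emph{bounded away from} $\atmoststeps[\threshold]$ (a quantitative consequence of the \skorokhodtopology-continuity of $\summaxjump{m}{\cdot}$ via \Cref{Lemma: Regularity of the Skorokhod space}) and that $\measureconstant[\threshold+1](\partial \lcrhittingset) = \measureconstant[\threshold+1](\partial \ecomorhittingset)=0$, the latter following from the absolute continuity of $\measurepowerexponent{\threshold+1}$ with respect to Lebesgue measure on $\realdecreasing[\threshold+1]$.

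Once \eqref{Eq. Sample path large deviations theorem} is applied, the constant reduces to a finite\-/dimensional integral. Parameterising $\stepfunctionsymbol \in \exactsteps[\threshold+1]$ by its ordered jumps $\maxjump[1] \geq \cdots \geq \maxjump[\threshold+1]$ at i.i.d.\ uniform times $\uniform[1],\dots,\uniform[\threshold+1]$, the LCR profile $\stepfunction - \summaxjump{\threshold}{\stepfunctionsymbol}$ vanishes on $\{t<U^\star\}$ and equals $\maxjump[\threshold+1]$ on $\{t\geq U^\star\}$, where $U^\star := \max_i \uniform$; consequently
\[
    \lcrmapping \;=\; \max\bigl\{0,\ \maxjump[\threshold+1]-\levydrift U^\star,\ \maxjump[\threshold+1]-\levydrift\bigr\},
\]
so that $\{\lcrmapping\geq\crosslevel\}$ is $\{\maxjump[\threshold+1]\geq\crosslevel,\ U^\star\leq (\maxjump[\threshold+1]-\crosslevel)/\levydrift\wedge 1\}$ for $\levydrift>0$, resp.\ $\{\maxjump[\threshold+1]\geq\crosslevel+\levydrift\}$ for $\levydrift<0$. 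Using $\pr(U^\star\leq u)=u^{\threshold+1}$ and integrating out $\maxjump[1],\dots,\maxjump[\threshold]$ in \eqref{Eq. The measure of the constants} (which contributes $\maxjump[\threshold+1]^{-\threshold\powerindex}/\threshold!$), \LCRpreconstant reduces to a one\-/dimensional integral that, via integration by parts, matches the Euler representation of $\prescript{}{2}F_1$ and gives the stated LCR value. For ECOMOR the profile $\stepfunction - (\threshold+1)\summaxjump{\threshold}{\stepfunctionsymbol} + \threshold\summaxjump{\threshold+1}{\stepfunctionsymbol}$ equals $(\threshold+1)\maxjump[\threshold+1]$ on $\{t\geq U^\star\}$, so the change of variable $z=(\threshold+1)\maxjump[\threshold+1]$ reduces the calculation to the LCR one and yields the extra prefactor $(\threshold+1)^{(\threshold+1)\powerindex}$ in \fixedpreconstant.

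The main obstacle is the topological part: $\summaxjump{m}{\cdot}$ is \skorokhodtopology-continuous only away from paths with ties between the $m$-th and $(m{+}1)$-st largest jump, so establishing continuity of $\lcrmappingsymbol,\ecomormappingsymbol$ on the relevant parts of \skorokhodspace, together with the quantitative separation of \lcrhittingset and \ecomorhittingset from $\atmoststeps[\threshold]$, is where the substantive work lies. Once these points are settled, the explicit evaluation of \fixedpreconstant above is a routine computation.
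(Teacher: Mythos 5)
Your proposal follows the paper's proof strategy essentially verbatim: recast $\asymptotfiniteruinprobability$ as a hitting probability for $\scaledaggregateclaimssequence$ via the mappings $\lcrmappingsymbol,\ecomormappingsymbol$, verify the conditions of the sample-path large-deviations theorem of \cite{rhee2017sample} (continuity, rate function $\threshold+1$, bounded-away property, $\measureconstant[\threshold+1]$-continuity of the boundary), and evaluate the resulting constant by integrating the power-law jump density against the distribution of $U^\star = \max_i\uniform$. One small inaccuracy in your characterisation of the topological step: you suggest $\summaxjump{m}{\cdot}$ (and hence $\lcrmappingsymbol,\ecomormappingsymbol$) is $\skorokhodtopology$-continuous only away from tied jumps, whereas the paper proves unconditional Lipschitz continuity of the maps (via \Cref{Lemma. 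Useful inequality about the sum of the max jump}); ties are harmless because one tracks the \emph{sum} of the $m$ largest jumps rather than their identities, and the boundary effect in $t$ is absorbed by the outer supremum and the homeomorphism — so the continuity step is in fact cleaner than you anticipated.
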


The proof of \Cref{Theorem: Asymptotic ruin probability} is based on sample\-/path large\-/deviations results developed in \cite{rhee2017sample}. Specifically, Theorems~3.1--3.2 in \cite{rhee2017sample} provide the conditions under which the result \eqref{Eq. Sample path large deviations theorem} holds, and in addition the $\liminf$ and $\limsup$ are equal. Thus, to achieve our goal, we must verify that these conditions are satisfied for $\discretescaledlevyprocess = \scaledaggregateclaimssequence$ and $\randomset = \lcrhittingset$ (LCR) or $\randomset = \ecomorhittingset$ (ECOMOR) defined in \Cref{Eq. Hitting set LCR,Eq. Hitting set ECOMOR}, respectively. However, their verification is rather involved. Hence, to make the proof of \Cref{Theorem: Asymptotic ruin probability} more accessible, we split it in various steps after the aforementioned conditions and we provide additional explanations for each step.

Note that all of the forthcoming results are similar in the two treaties with possible deviations in small details. Therefore, we will first prove them for the LCR treaty and then show briefly how they can be extended to the ECOMOR treaty.

\subsection{Proof of \Cref*{Theorem: Asymptotic ruin probability}}
The first step is to show that both mappings $\lcrmappingsymbol, \ecomormappingsymbol: \skorokhodspace \to \reals$ from \Cref{Eq. Mapping LCR,Eq. Mapping ECOMOR}, respectively, are Lipschitz continuous. Due to their continuity, \Cref{Eq. Probability of the scaled reinsurance} will hold and, consequently, we will be able to write $\pr \big( \lcrmappingsymbol (\scaledaggregateclaimssequence) \geq \crosslevel \big)
= \pr ( \scaledaggregateclaimssequence \in \lcrhittingset )$ and $\pr \big( \ecomormappingsymbol (\scaledaggregateclaimssequence) \geq \crosslevel \big)
= \allowbreak \pr ( \scaledaggregateclaimssequence \in \ecomorhittingset )$. For this, we need the following intermediate result.
\begin{lemma}\label{Lemma. Useful inequality about the sum of the max jump}
For every $(\stepfunctionsymbol, \skorokhodfunctionsymbol) \in \skorokhodspace[2]$, $m \in \naturals$, and $\homeomorphismsymbol \in \bijectionset$, it holds that
\begin{equation}
    \abs[{\summaxjump{m}{\skorokhodfunctionsymbol \circ \homeomorphismsymbol}  - \summaxjump{m}{\stepfunctionsymbol} }] \leq 2 m \supnorm[\stepfunctionsymbol - \skorokhodfunctionsymbol \circ \homeomorphismsymbol], \qquad \forall t \in [0,1].
\end{equation}
\end{lemma}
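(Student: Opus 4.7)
The plan is to reduce the inequality to a pointwise comparison of jump sizes of the two c\`adl\`ag functions $\xi$ and $\zeta \circ h$, and then sum the resulting bound over at most $m$ points. I first note that the sup\-/norm estimate $|\psi_1(s) - \psi_2(s)| \leq \|\psi_1 - \psi_2\|$ extends automatically to left limits by passing to the limit along any increasing sequence $u_n \uparrow s$ and invoking the c\`adl\`ag property, yielding
$$\bigl|\Delta \psi_1(s) - \Delta \psi_2(s)\bigr| \leq 2\,\|\psi_1 - \psi_2\|, \qquad \Delta \psi(s) := \psi(s) - \psi(s^-),$$
for every $s \in (0,1]$. Applying this with $\psi_1 = \xi$ and $\psi_2 = \zeta \circ h$ gives uniform control of the discrepancy of individual jump increments of the two functions.

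Next, I exploit that the supremum in \eqref{Eq. Definition of sum of largest claim sizes} is in fact attained, as noted in the excerpt immediately after \Cref{Lemma: Regularity of the Skorokhod space}. Without loss of generality I assume $\summaxjump{m}{\xi} \geq \summaxjump{m}{\zeta \circ h}$ and pick distinct maximisers $(s_1^\ast, \ldots, s_m^\ast) \in [0,t]^m$ realising $\summaxjump{m}{\xi}$. Since the very same tuple of times is a feasible (though generally non\-/optimal) candidate in the definition of $\summaxjump{m}{\zeta \circ h}$, I obtain
$$0 \leq \summaxjump{m}{\xi} - \summaxjump{m}{\zeta \circ h} \leq \sum_{i=1}^m \bigl(\Delta \xi(s_i^\ast) - \Delta(\zeta \circ h)(s_i^\ast)\bigr) \leq 2m\,\|\xi - \zeta \circ h\|,$$
where the last inequality is obtained by summing the pointwise jump bound over the $m$ distinct times $s_i^\ast$. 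The reverse inequality is immediate by symmetry after interchanging the roles of $\xi$ and $\zeta \circ h$, so the absolute value bound claimed in the lemma follows.

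I do not anticipate any genuine obstacle. The only mildly subtle point is justifying the left\-/limit version of the sup\-/norm estimate, which is immediate from the c\`adl\`ag property; everything else is a one\-/line application of the triangle inequality. The bound is uniform in $t \in [0,1]$ because neither the choice of maximisers nor the constant $2m$ depends on $t$, and the degenerate case $t = 0$ is handled by the convention $\summaxjump[0]{m}{\stepfunctionsymbol} = 0$ stated after \eqref{Eq. Definition of sum of largest claim sizes}.
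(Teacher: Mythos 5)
Your proof is correct and follows essentially the same strategy as the paper's: use attainment of the supremum (via \Cref{Lemma: Regularity of the Skorokhod space}), feed the optimal jump times of one function as a feasible candidate into the other, and bound the jump-size discrepancies by $2\|\xi - \zeta\circ h\|$ using the sup-norm estimate together with its left-limit version. The only cosmetic difference is that the paper selects the maximizers for $\zeta\circ h$ and handles the two directions symmetrically, whereas you normalize WLOG and take maximizers for the larger of the two; your explicit justification of the left-limit bound via the c\`adl\`ag property is a small extra care that the paper leaves implicit.
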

\begin{proof}
By the definition of \summaxjump{m}{\skorokhodfunctionsymbol \circ \homeomorphismsymbol}, there exists $(\sigma_1,\dots,\sigma_m) \in [0,t]^m$ with $\sigma_i \neq \sigma_j$ for all $i \neq j$, such that
\begin{equation}\label{Eq. Estimation sum of first r largest jumps}
    \summaxjump{m}{\skorokhodfunctionsymbol \circ \homeomorphismsymbol}
    = \sum_{i=1}^m \big( \skorokhodfunctionsymbol \circ \homeomorphismsymbol(\sigma_i)-\skorokhodfunctionsymbol \circ \homeomorphismsymbol(\sigma_i^-) \big).
\end{equation}
In addition, we have that
\begin{equation}\label{Eq. Inequality sum of first r largest jumps}
    \summaxjump{m}{\stepfunctionsymbol}
    = \max_{\substack{ (s_1,\dots,s_m)\in[0,t]^m\\s_i\neq s_j,\forall i\neq j}} \sum_{i=1}^m \big( \stepfunction[s_i]-\stepfunction[s_i^-] \big)
    \geq \sum_{i=1}^m \big( \stepfunction[\sigma_i]-\stepfunction[\sigma_i^-] \big).
\end{equation}
Subtracting now \Cref{Eq. Estimation sum of first r largest jumps,Eq. Inequality sum of first r largest jumps}, we obtain
\begin{align*}
    \summaxjump{m}{\skorokhodfunctionsymbol \circ \homeomorphismsymbol}
    - \summaxjump{m}{\stepfunctionsymbol}
    &\leq \sum_{i=1}^m \big( \skorokhodfunctionsymbol \circ \homeomorphismsymbol(\sigma_i)-\skorokhodfunctionsymbol \circ \homeomorphismsymbol(\sigma_i^-)  - \stepfunction[\sigma_i]+\stepfunction[\sigma_i^-] \big)\\
    &\leq \sum_{i=1}^m \big( \abs[{\skorokhodfunctionsymbol \circ \homeomorphismsymbol(\sigma_i) - \stepfunction[\sigma_i]}] + \abs[{\skorokhodfunctionsymbol \circ \homeomorphismsymbol(\sigma_i^-) - \stepfunction[\sigma_i^-]}] \big)\\
    &\leq 2 m \supnorm[\stepfunctionsymbol - \skorokhodfunctionsymbol \circ \homeomorphismsymbol].
\end{align*}
Following similar arguments, we can also show that $\summaxjump{m}{\stepfunctionsymbol} - \summaxjump{m}{\skorokhodfunctionsymbol \circ \homeomorphismsymbol} \leq 2 m \supnorm[\stepfunctionsymbol - \skorokhodfunctionsymbol \circ \homeomorphismsymbol]$, which completes the proof.
\end{proof}

We are now ready to establish the desired continuity.

\begin{lemma}[Lipschitz continuity of the mapping]
The mappings $\lcrmappingsymbol, \ecomormappingsymbol: \skorokhodspace \to \reals$ defined by \Cref{Eq. Mapping LCR,Eq. Mapping ECOMOR}, respectively, are Lipschitz continuous w.r.t.\ \skorokhodtopology. More precisely, there exist $\lcrlipschitz \in [0,\abs[\levydrift] + 2 \threshold + 1]$ and $\ecomorlipschitz \in [0,\abs[\levydrift] + 4 \threshold^2 + 4 \threshold + 1]$ such that $\abs[{\lcrmapping - \lcrmapping[\skorokhodfunctionsymbol]}] \leq \lcrlipschitz \distance[\stepfunctionsymbol,\skorokhodfunctionsymbol]$ and $\abs[{\ecomormapping - \ecomormapping[\skorokhodfunctionsymbol]}] \leq \ecomorlipschitz \distance[\stepfunctionsymbol,\skorokhodfunctionsymbol]$, for all $(\stepfunctionsymbol, \skorokhodfunctionsymbol) \in \skorokhodspace[2]$.
\end{lemma}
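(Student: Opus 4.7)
The plan is to handle both mappings through a single time-change argument that exploits the infimum structure of the Skorokhod metric. First I would fix $(\stepfunctionsymbol,\skorokhodfunctionsymbol) \in \skorokhodspace[2]$ and an arbitrary $\homeomorphismsymbol \in \bijectionset$, and rewrite $\lcrmapping[\skorokhodfunctionsymbol]$ so that $\skorokhodfunctionsymbol$ only appears in the form $\skorokhodfunctionsymbol \circ \homeomorphismsymbol$. Since $\homeomorphismsymbol$ is a strictly increasing continuous bijection of $[0,1]$ onto itself, the substitution $t = \homeomorphismsymbol(s)$ in \eqref{Eq. Mapping LCR}, combined with the observation that the jumps of $\skorokhodfunctionsymbol \circ \homeomorphismsymbol$ in $[0,s]$ are in height-preserving bijection with the jumps of $\skorokhodfunctionsymbol$ in $[0,\homeomorphismsymbol(s)]$, gives
\begin{equation*}
    \lcrmapping[\skorokhodfunctionsymbol]
    = \sup_{s \in [0,1]} \big\{ \skorokhodfunctionsymbol(\homeomorphismsymbol(s)) - \levydrift \homeomorphismsymbol(s) - \summaxjump[s]{\threshold}{\skorokhodfunctionsymbol \circ \homeomorphismsymbol} \big\}.
\end{equation*}

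I would then compare this expression with \eqref{Eq. Mapping LCR} using the elementary bound $\abs[{\sup_t A(t) - \sup_t B(t)}] \leq \sup_t \abs[{A(t) - B(t)}]$ and the triangle inequality, to deduce
\begin{equation*}
    \abs[{\lcrmapping - \lcrmapping[\skorokhodfunctionsymbol]}]
    \leq \supnorm[\stepfunctionsymbol - \skorokhodfunctionsymbol \circ \homeomorphismsymbol] + \abs[\levydrift] \supnorm[\homeomorphismsymbol - \identitymapping] + \sup_{t \in [0,1]} \abs[{\summaxjump{\threshold}{\stepfunctionsymbol} - \summaxjump{\threshold}{\skorokhodfunctionsymbol \circ \homeomorphismsymbol}}].
\end{equation*}
Invoking \Cref{Lemma. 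Useful inequality about the sum of the max jump} bounds the last term by $2\threshold \supnorm[\stepfunctionsymbol - \skorokhodfunctionsymbol \circ \homeomorphismsymbol]$, so the right-hand side is at most $(\abs[\levydrift] + 2\threshold + 1) \big(\supnorm[\homeomorphismsymbol - \identitymapping] \vee \supnorm[\stepfunctionsymbol - \skorokhodfunctionsymbol \circ \homeomorphismsymbol]\big)$. Taking the infimum over $\homeomorphismsymbol \in \bijectionset$ then yields $\lcrlipschitz \leq \abs[\levydrift] + 2\threshold + 1$.

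For \ecomormappingsymbol, the same substitution applies to \eqref{Eq. Mapping ECOMOR}, and the triangle inequality now produces two jump-sum discrepancies weighted by $\threshold+1$ and $\threshold$, respectively. Applying \Cref{Lemma. Useful inequality about the sum of the max jump} separately to the terms involving $\summaxjump{\threshold}{\cdot}$ and $\summaxjump{\threshold+1}{\cdot}$ contributes $2\threshold(\threshold+1)$ and $2(\threshold+1)\threshold$; together with the $\abs[\levydrift]$ and the $1$ inherited from the $\levydrift t$ and $\stepfunction$ terms, this gives $\ecomorlipschitz \leq \abs[\levydrift] + 4\threshold^2 + 4\threshold + 1$. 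The one step deserving care is the jump-preserving identity $\summaxjump[\homeomorphismsymbol(s)]{\threshold}{\skorokhodfunctionsymbol} = \summaxjump[s]{\threshold}{\skorokhodfunctionsymbol \circ \homeomorphismsymbol}$ that underpins the substitution; once this is in hand, the rest of the argument is a mechanical combination of the supremum-difference bound, the triangle inequality, and \Cref{Lemma. Useful inequality about the sum of the max jump}.
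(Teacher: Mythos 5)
Your proof is correct and uses essentially the same approach as the paper: compose with a time change to move both functionals to the same time parametrization, exploit the fact that the jump structure is preserved under the homeomorphism so that $\summaxjump[\homeomorphismsymbol(s)]{\threshold}{\skorokhodfunctionsymbol} = \summaxjump[s]{\threshold}{\skorokhodfunctionsymbol \circ \homeomorphismsymbol}$, and then apply \Cref{Lemma. Useful inequality about the sum of the max jump} term by term. The only difference is stylistic — the paper extracts an $\epsilon$-optimal $t_*$ and $\homeomorphismsymbol$ and lets $\epsilon \to 0$, while you use the bound $\abs[{\sup_t A(t) - \sup_t B(t)}] \leq \sup_t \abs[{A(t) - B(t)}]$ and take the infimum over $\homeomorphismsymbol$ at the end, which slightly streamlines the argument and avoids the w.l.o.g. case split.
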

\begin{proof}
W.l.o.g.\ we assume that $\lcrmapping \geq \lcrmapping[\skorokhodfunctionsymbol]$, otherwise we switch the roles of \stepfunctionsymbol and \skorokhodfunctionsymbol. For every $\epsilon>0$, there exists $t_* \in [0,1]$ such that
\begin{equation}\label{Eq. LCR continuity inequality -1}
    \stepfunction[t_*] - \levydrift t_*  - \summaxjump[t_*]{\threshold}{\stepfunctionsymbol} > \lcrmapping - \epsilon.
\end{equation}
On the other hand, by the definition of \skorokhodtopology, there exists $\homeomorphismsymbol = \homeomorphismsymbol(\stepfunctionsymbol,\skorokhodfunctionsymbol,\epsilon) \in \bijectionset$ so that
\begin{equation}\label{Eq. Choice homeomorphism}
    \distance[\stepfunctionsymbol,\skorokhodfunctionsymbol] + \epsilon
    =  \supnorm[\homeomorphismsymbol - \identitymapping ] \vee \supnorm[\stepfunctionsymbol - \skorokhodfunctionsymbol \circ \homeomorphismsymbol]
    \geq \big(\homeomorphismsymbol(t_*)  - t_* \big) \vee \big( \stepfunction[t_*] - \skorokhodfunctionsymbol \circ \homeomorphismsymbol(t_*) \big).
\end{equation}
Furthermore, using the fact that \homeomorphismsymbol is a homeomorphism on $[0,1]$, we obtain
\begin{align}
    &\skorokhodfunctionsymbol \circ \homeomorphismsymbol(t_*) - \levydrift \homeomorphismsymbol(t_*) - \summaxjump[t_*]{\threshold}{\skorokhodfunctionsymbol \circ \homeomorphismsymbol} \notag \\
    &= \skorokhodfunctionsymbol \circ \homeomorphismsymbol(t_*) - \levydrift \homeomorphismsymbol(t_*) - \max_{\substack{ (s_1,\dots,s_\threshold)\in[0,t_*]^\threshold\\s_i\neq s_j,\forall i\neq j}} \sum_{i=1}^\threshold \big( \skorokhodfunctionsymbol \circ \homeomorphismsymbol(s_i)- \skorokhodfunctionsymbol \circ \homeomorphismsymbol(s_i^-) \big) \notag \\
    &= \skorokhodfunctionsymbol \big( \homeomorphismsymbol(t_*)\big) - \levydrift \homeomorphismsymbol(t_*) - \max_{\substack{ (s_1,\dots,s_\threshold)\in[0,\homeomorphismsymbol(t_*)]^\threshold\\s_i\neq s_j,\forall i\neq j}} \sum_{i=1}^\threshold \big( \skorokhodfunctionsymbol(s_i)- \skorokhodfunctionsymbol(s_i^-) \big) \notag \\
    &= \skorokhodfunctionsymbol \big(  \homeomorphismsymbol(t_*) \big) - \levydrift \homeomorphismsymbol(t_*) - \summaxjump[\homeomorphismsymbol(t_*)]{\threshold}{\skorokhodfunctionsymbol} \leq \lcrmapping[\skorokhodfunctionsymbol].\label{Eq. LCR continuity inequality -2}
\end{align}
Subtracting \eqref{Eq. LCR continuity inequality -2} from \eqref{Eq. LCR continuity inequality -1} yields
\begin{align*}
    \lcrmapping -   \lcrmapping[\skorokhodfunctionsymbol]
    &< \epsilon + \big( \stepfunction[t_*] - \skorokhodfunctionsymbol \circ \homeomorphismsymbol(t_*) \big) + \levydrift \big(\homeomorphismsymbol(t_*)  - t_* \big) + \big( \summaxjump[t_*]{\threshold}{\skorokhodfunctionsymbol \circ \homeomorphismsymbol}
    - \summaxjump[t_*]{\threshold}{\stepfunctionsymbol} \big) \notag \\
 %   &< \epsilon + \big( \distance[\stepfunctionsymbol,\skorokhodfunctionsymbol] + \epsilon \big) + \levydrift \big( \distance[\stepfunctionsymbol,\skorokhodfunctionsymbol] + \epsilon \big) + 2 \threshold \supnorm[\stepfunctionsymbol - \skorokhodfunctionsymbol \circ \homeomorphismsymbol] \notag \\
    &< \epsilon + \big( \distance[\stepfunctionsymbol,\skorokhodfunctionsymbol] + \epsilon \big) + \abs[\levydrift] \big( \distance[\stepfunctionsymbol,\skorokhodfunctionsymbol] + \epsilon \big) + 2 \threshold \big( \distance[\stepfunctionsymbol,\skorokhodfunctionsymbol] + \epsilon \big) \notag   \\
    &= (2 + \abs[\levydrift] + 2 \threshold) \epsilon + (1 + \abs[\levydrift] + 2 \threshold) \distance[\stepfunctionsymbol,\skorokhodfunctionsymbol],
\end{align*}
where we have also used \eqref{Eq. Choice homeomorphism} and $\summaxjump[t_*]{\threshold}{\skorokhodfunctionsymbol \circ \homeomorphismsymbol}
- \summaxjump[t_*]{\threshold}{\stepfunctionsymbol} \leq 2 \threshold \supnorm[\stepfunctionsymbol - \skorokhodfunctionsymbol \circ \homeomorphismsymbol]$ by applying \Cref{Lemma. Useful inequality about the sum of the max jump} with $t=t_*$ and $m=\threshold$. Letting $\epsilon \to 0$, we conclude that $\lcrmapping - \lcrmapping[\skorokhodfunctionsymbol] \leq (1 + \abs[\levydrift] + 2 \threshold) \distance[\stepfunctionsymbol,\skorokhodfunctionsymbol]$, i.e.\ \lcrmappingsymbol is Lipschitz continuous. The Lipschitz continuity for the \ecomormappingsymbol mapping can be shown in an analogous manner. More precisely, for every $\epsilon>0$, there exists $t_* \in [0,1]$ such that
\begin{equation}\label{Eq. ECOMOR continuity inequality -1}
    \stepfunction[t_*] - \levydrift t_*  - (\threshold+1) \summaxjump[t_*]{\threshold}{\stepfunctionsymbol} + \threshold \summaxjump[t_*]{\threshold+1}{\stepfunctionsymbol} > \ecomormapping - \epsilon.
\end{equation}
For a homeomorphism \homeomorphismsymbol on $[0,1]$ satisfying \Cref{Eq. Choice homeomorphism}, we have
\begin{align}
    &\skorokhodfunctionsymbol \circ \homeomorphismsymbol(t_*) - \levydrift \homeomorphismsymbol(t_*) - (\threshold+1) \summaxjump[t_*]{\threshold}{\skorokhodfunctionsymbol \circ \homeomorphismsymbol} + \threshold \summaxjump[t_*]{\threshold+1}{\skorokhodfunctionsymbol \circ \homeomorphismsymbol} \notag \\
    &= \skorokhodfunctionsymbol \big( \homeomorphismsymbol(t_*) \big) - \levydrift \homeomorphismsymbol(t_*)
    - (\threshold+1) \summaxjump[\homeomorphismsymbol(t_*)]{\threshold}{\skorokhodfunctionsymbol} + \threshold \summaxjump[\homeomorphismsymbol(t_*)]{\threshold+1}{\skorokhodfunctionsymbol}
    \leq \ecomormapping[\skorokhodfunctionsymbol].\label{Eq. ECOMOR continuity inequality -2}
\end{align}
We assume now w.l.o.g.\ that $\ecomormapping \geq \ecomormapping[\skorokhodfunctionsymbol]$ and we subtract \eqref{Eq. ECOMOR continuity inequality -2} from \eqref{Eq. ECOMOR continuity inequality -1} to attain
\begin{align*}
    \ecomormapping -   \ecomormapping[\skorokhodfunctionsymbol]
    <& \ \epsilon + \big( \stepfunction[t_*] - \skorokhodfunctionsymbol \circ \homeomorphismsymbol(t_*) \big) + \levydrift \big(\homeomorphismsymbol(t_*)  - t_* \big) + (\threshold +1) \big( \summaxjump[t_*]{\threshold}{\skorokhodfunctionsymbol \circ \homeomorphismsymbol}
    - \summaxjump[t_*]{\threshold}{\stepfunctionsymbol} \big) \\
    &+ \threshold  \big( \summaxjump[t_*]{\threshold+1}{\stepfunctionsymbol} - \summaxjump[t_*]{\threshold+1}{\skorokhodfunctionsymbol \circ \homeomorphismsymbol}\big) \notag \\
    <& \ \ \big( 2 + \abs[\levydrift] + 4 \threshold (\threshold +1) \big) \epsilon + \big( 1 + \abs[\levydrift] +  4 \threshold (\threshold +1) \big) \distance[\stepfunctionsymbol,\skorokhodfunctionsymbol],
\end{align*}
where we have also used \eqref{Eq. Choice homeomorphism} and twice \Cref{Lemma. Useful inequality about the sum of the max jump} with $m=\threshold,\threshold+1$ and $t=t_*$. Letting $\epsilon \to 0$, the result is immediate.
\end{proof}

As a next step, we calculate the rate functions \ratefunction[\lcrhittingset] and \ratefunction[\ecomorhittingset] that appear in \Cref{Eq. Sample path large deviations theorem} and are formally defined in \eqref{Eq. Formal definition of the rate function}. We set for simplicity $\levydrift_+=\max \{0,\levydrift\}$ and $\levydrift_-=\max \{0,-\levydrift\}$.

\begin{lemma}[Evaluation of the rate function]\label{Lemma: The rate function}
The rate function defined by \Cref{Eq. Formal definition of the rate function} is equal to $\threshold +1$ in both treaties, i.e.
\begin{equation*}
    \ratefunction[\lcrhittingset] = \ratefunction[\ecomorhittingset] = \threshold +1.
\end{equation*}
\end{lemma}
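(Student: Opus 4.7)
The whole argument rests on one structural observation: if $\stepfunctionsymbol \in \increasingstepset$ has at most \threshold jumps, then for every integer $m \geq \threshold$ and every $t \in [0,1]$ one has $\summaxjump{m}{\stepfunctionsymbol} = \stepfunction$. Indeed, the supremum in \Cref{Eq. Definition of sum of largest claim sizes} is attained by choosing $m$ distinct points in $[0,t]$ that include all jump times of \stepfunctionsymbol up to $t$ (possible, since at most \threshold such times exist and $m \geq \threshold$); the remaining chosen points contribute zero.

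The lower bound is then immediate. For any $\stepfunctionsymbol \in \increasingstepset \cap \atmoststeps[\threshold]$, the observation gives $\stepfunction - \summaxjump{\threshold}{\stepfunctionsymbol} = 0$ for the LCR bracket in \Cref{Eq. Hitting set LCR}; and for the ECOMOR bracket in \Cref{Eq. Hitting set ECOMOR} the combination $\stepfunction - (\threshold+1)\summaxjump{\threshold}{\stepfunctionsymbol} + \threshold\summaxjump{\threshold+1}{\stepfunctionsymbol}$ equals $\stepfunction\bigl(1-(\threshold+1)+\threshold\bigr) = 0$. Both brackets therefore collapse to $-\levydrift t$, and $\sup_{t\in[0,1]}\{-\levydrift t\} = \levydrift_-$ is strictly below \crosslevel under the standing assumptions $\crosslevel>0$ and $\levydrift+\crosslevel > 0$. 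Hence no $\stepfunctionsymbol \in \atmoststeps[\threshold]$ lies in \lcrhittingset or \ecomorhittingset, and \Cref{Eq. Formal definition of the rate function} yields $\ratefunction[\lcrhittingset] \geq \threshold+1$ and $\ratefunction[\ecomorhittingset] \geq \threshold+1$.

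For the matching upper bound, I would exhibit an explicit witness in $\exactsteps[\threshold+1]$ that lies in both hitting sets. Fix distinct jump times $0 < \maxjumptime[1] < \cdots < \maxjumptime[\threshold+1] < 1$ and a common jump size $\maxjumpsymbol > 0$, and take $\stepfunction = \maxjumpsymbol \sum_{i=1}^{\threshold+1}\indicatorfunction[\{\maxjumptime,1\}](t)$. At $t = \maxjumptime[\threshold+1]$ all $\threshold+1$ equal jumps have occurred, so $\summaxjump[{\maxjumptime[\threshold+1]}]{\threshold}{\stepfunctionsymbol} = \threshold \maxjumpsymbol$ and $\summaxjump[{\maxjumptime[\threshold+1]}]{\threshold+1}{\stepfunctionsymbol} = (\threshold+1)\maxjumpsymbol$; the LCR functional from \Cref{Eq. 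Mapping LCR} then evaluates to $\maxjumpsymbol - \levydrift \maxjumptime[\threshold+1]$, and the ECOMOR functional from \Cref{Eq. Mapping ECOMOR} to $(\threshold+1)\maxjumpsymbol - \levydrift \maxjumptime[\threshold+1]$ after the $-(\threshold+1)\summaxjump{\threshold}{\cdot}$ and $+\threshold\summaxjump{\threshold+1}{\cdot}$ contributions cancel. Choosing \maxjumpsymbol sufficiently large (and $\maxjumptime[\threshold+1]$ close to $0$ whenever $\levydrift > 0$) drives both expressions above \crosslevel, so this candidate sits in $\lcrhittingset \cap \ecomorhittingset$ and each rate function is at most $\threshold+1$.

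The only delicate point is noticing the algebraic cancellation inside the ECOMOR bracket on $\atmoststeps[\threshold]$, which is precisely what forces $\ratefunction[\ecomorhittingset]$ to coincide with $\ratefunction[\lcrhittingset]$ rather than take a different value; everything else reduces to the single identity $\summaxjump{m}{\stepfunctionsymbol} = \stepfunction$ for $\discontinuities \leq m$.
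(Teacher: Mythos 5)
Your proof is correct, and it follows essentially the same route as the paper: the lower bound $\ratefunction[\randomset]\geq\threshold+1$ comes from the cancellation inside the sup\-/bracket when $\stepfunctionsymbol$ has at most $\threshold$ jumps (so the functional collapses to $\sup_t\{-\levydrift t\}=\levydrift_-<\crosslevel$ under the standing assumptions), and the upper bound comes from producing an element of $\exactsteps[\threshold+1]$ in the hitting set. Your streamlined version — stating the key identity $\summaxjump{m}{\stepfunctionsymbol}=\stepfunction$ for $\discontinuities\leq m$ once and reusing it, then exhibiting a single explicit witness with equal jump sizes — is a little cleaner for the lemma in isolation. The paper's version is more laborious in the upper\-/bound step because it evaluates $\lcrmapping$ and $\ecomormapping$ for a \emph{general} $(\threshold+1)$\-/step function, arriving at $\lcrmapping=\maxjump[\threshold+1]-\levydrift_+\max\{\maxjumptime[1],\dots,\maxjumptime[\threshold+1]\}+\levydrift_-$ and the analogous formula for ECOMOR. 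That extra computation is not gratuitous: it produces the exact description of $\lcrhittingset\cap\increasingstepset$ and $\ecomorhittingset\cap\increasingstepset$ (recorded in \Cref{Remark: Form of the minimal step function}), which is the input for the pre\-/constant integral in \Cref{Lemma: The pre-constant} and is also reused in the bounded\-/away and $\measureconstant[\threshold+1]$\-/continuity lemmas. So if you adopt your shorter argument here, you would need to derive that characterization elsewhere; as a proof of the lemma's statement alone, though, yours is complete and correct.
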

\begin{proof}
We need to show first that \ratefunction[\lcrhittingset] cannot take any value smaller than or equal to \threshold. Let us assume on the contrary that $\stepfunctionsymbol \in \lcrhittingset \cap \increasingstepset$ such that $\discontinuities =k \leq \threshold$. This means that $\stepfunctionsymbol = \sum_{i \leq k} \maxjump \indicatorfunction[\{\maxjumptime,1\}]$, with $\maxjump[1] \geq \maxjump[2] \geq \dots \maxjump[k] >0$ and $\{ 0,\maxjumptime[1],\maxjumptime[2],\dots,\maxjumptime[k],1 \}$ all distinct. By taking into account \Cref{Assumption: Zero retention level,Assumption: Immediate payment of claims}, we calculate
\begin{align*}
    \lcrmapping
    &= \sup_{t \in [0,1]} \Big\{ \stepfunction  - \levydrift t  - \summaxjump{\threshold}{\stepfunctionsymbol}\Big\}
    = \sup_{t \in [0,1]} \Big\{ \cancel{\sum_{i=1}^k \maxjump \indicatorfunction[\{\maxjumptime,1\}](t)}  - \levydrift t  - \cancel{\sum_{i=1}^k \maxjump \indicatorfunction[\{\maxjumptime,1\}](t)} \Big\}
    = \levydrift_-,
\end{align*}
which states that $\stepfunctionsymbol \not\in \lcrhittingset$ because $\lcrmapping = \levydrift_- \not \geq \crosslevel$. As a result, $\ratefunction[\lcrhittingset] \not = k$, $k \leq r$.

Let us assume now that $\stepfunctionsymbol \in \lcrhittingset \cap \increasingstepset$ such that $\discontinuities = \threshold+1$, i.e.\ $\stepfunctionsymbol = \sum_{i = 1}^{\threshold+1} \maxjump \indicatorfunction[\{\maxjumptime,1\}]$, with $\maxjump[1] \geq \maxjump[2] \geq \dots \maxjump[\threshold+1] >0$ and $\{ 0,\maxjumptime[1],\maxjumptime[2],\dots,\maxjumptime[\threshold+1],1 \}$ all distinct. To calculate \lcrmapping, observe first that
\begin{equation}\label{Eq. Difference between the step function and its r max jumps}
    \stepfunction  - \summaxjump{\threshold}{\stepfunctionsymbol}
    = \sum_{i=1}^{\threshold+1} \maxjump \indicatorfunction[\{\maxjumptime,1\}](t)  -  \summaxjump{\threshold}{\stepfunctionsymbol}
    =
    \begin{cases}
    0, & t< \max \{ \maxjumptime[1],\dots,\maxjumptime[\threshold+1] \}\\
    \maxjump[\threshold+1], & t \geq \max \{ \maxjumptime[1],\dots,\maxjumptime[\threshold+1] \}
    \end{cases},
\end{equation}
because all the claims are ``absorbed'' according to \Cref{Assumption: Immediate payment of claims} before the arrival of the ($\threshold+1$)st claim, which happens at time $t^* =  \max \{ \maxjumptime[1],\dots,\maxjumptime[\threshold+1] \}$. Thus, we can write
\begin{align*}
    \lcrmapping
    &= \sup_{t \in [0,1]} \Big\{ \stepfunction  - \levydrift t  - \summaxjump{\threshold}{\stepfunctionsymbol} \Big\}
    = \sup_{t \in [0,1]} \Big\{ \maxjump[\threshold+1] \prod_{i=1}^{\threshold+1} \indicatorfunction[\{\maxjumptime,1\}](t) - \levydrift t  \Big\}\\
    &= \maxjump[\threshold+1] - \levydrift_+ \max\{ \maxjumptime[1],\dots,\maxjumptime[\threshold+1] \} +\levydrift_- ,
\end{align*}
since $\maxjump[\threshold+1] \prod_{i=1}^{\threshold+1} \indicatorfunction[\{\maxjumptime,1\}](t)$ remains fixed at the value $\maxjump[\threshold+1]$ from $t^*= \max\{ \maxjumptime[1],\dots,\maxjumptime[\threshold+1] \}$ onward, while $-\levydrift t$ decreases or increases depending on the value of \levydrift. Due to the fact that $\stepfunctionsymbol \in \lcrhittingset$, we get
\[
\lcrmapping \geq \crosslevel \Rightarrow \maxjump[\threshold+1] \geq a + \levydrift_+ \max\{ \maxjumptime[1],\dots,\maxjumptime[\threshold+1] \} - \levydrift_- \geq a - \levydrift_- > 0,
\]
i.e.\ $\lcrhittingset \cap \increasingstepset \not = \emptyset$ but contains all step functions with $\threshold+1$ steps such that the ($\threshold+1$)st largest step satisfies: $\maxjump[\threshold+1] \geq a + \levydrift_+ \max\{ \maxjumptime[1],\dots,\maxjumptime[\threshold+1] \} - \levydrift_-$. Thus, $\ratefunction[\lcrhittingset] = \threshold +1$.

The proof for $\ratefunction[\ecomorhittingset] = \threshold +1$ in the ECOMOR treaty is similar. More precisely, it can easily be shown that $\not \exists \ \stepfunctionsymbol \in \ecomorhittingset \cap \increasingstepset$ with $\discontinuities =k \leq \threshold$. Consequently, $\ratefunction[\ecomorhittingset] \not = k$, $k \leq r$. Let us assume next that $\stepfunctionsymbol \in \ecomorhittingset \cap \increasingstepset$ such that $\discontinuities = \threshold+1$, i.e.\ $\stepfunctionsymbol = \sum_{i = 1}^{\threshold+1} \maxjump \indicatorfunction[\{\maxjumptime,1\}]$, with $\maxjump[1] \geq \maxjump[2] \geq \dots \maxjump[\threshold+1] >0$ and $\{ 0,\maxjumptime[1],\maxjumptime[2],\dots,\maxjumptime[\threshold+1],1 \}$ all distinct. It holds that
\begin{equation}\label{Eq. Difference max jumps in ECOMOR}
     \threshold \summaxjump{\threshold+1}{\stepfunctionsymbol} - (\threshold + 1)\summaxjump{\threshold}{\stepfunctionsymbol}
     = -\summaxjump{\threshold}{\stepfunctionsymbol} +
     \begin{cases}
        0, & t< \max \{ \maxjumptime[1],\dots,\maxjumptime[\threshold+1] \}\\
        \threshold \maxjump[\threshold+1], & t \geq \max \{ \maxjumptime[1],\dots,\maxjumptime[\threshold+1] \}
     \end{cases},
\end{equation}
due to \Cref{Assumption: Zero retention level}. By combining now \Cref{Eq. Difference between the step function and its r max jumps,Eq. Difference max jumps in ECOMOR}, we calculate
\begin{align*}
    \ecomormapping
    &= \sup_{t \in [0,1]} \Big\{ \stepfunction  - \levydrift t  -  (\threshold + 1)\summaxjump{\threshold}{\stepfunctionsymbol} + \threshold \summaxjump{\threshold+1}{\stepfunctionsymbol} \Big\}
    = \sup_{t \in [0,1]} \Big\{ (\threshold+1)\maxjump[\threshold+1] \prod_{i=1}^{\threshold+1} \indicatorfunction[\{\maxjumptime,1\}](t) - \levydrift t  \Big\}\\
    &=(\threshold+1)\maxjump[\threshold+1] - \levydrift_+ \max\{ \maxjumptime[1],\dots,\maxjumptime[\threshold+1] \} +\levydrift_- .
\end{align*}
Since $\stepfunctionsymbol \in \ecomorhittingset$, we get $\ecomormapping \geq \crosslevel \Rightarrow (\threshold +1)\maxjump[\threshold+1] \geq \crosslevel + \levydrift_+ \max\{ \maxjumptime[1],\dots,\maxjumptime[\threshold+1] \} - \levydrift_-$, i.e.\ $\ecomorhittingset \cap \increasingstepset \not = \emptyset$ but contains all step functions with $\threshold+1$ steps such that the ($\threshold+1$)st largest step satisfies: $\maxjump[\threshold+1] \geq \big( \crosslevel + \levydrift_+ \max\{ \maxjumptime[1],\dots,\maxjumptime[\threshold+1] \} - \levydrift_- \big)/(\threshold+1) $. Thus, $\ratefunction[\ecomorhittingset] = \threshold +1$, and the proof is complete.
\end{proof}

\begin{remark}\label{Remark: Form of the minimal step function}
The above lemma does not only give the value of the rate function, but it also provides the form of the minimal \stepfunctionsymbol that belongs to the sets \lcrhittingset and \ecomorhittingset, i.e.\ all step functions with $\threshold+1$ steps such that their ($\threshold+1$)st greatest step is greater than or equal to the value $\crosslevel + \levydrift_+ \max\{ \maxjumptime[1],\dots,\maxjumptime[\threshold+1] \} - \levydrift_- $ in the LCR treaty and the value $(\crosslevel + \levydrift_+ \max\{ \maxjumptime[1],\dots,\maxjumptime[\threshold+1] \} - \levydrift_- )/(\threshold+1)$ in the ECOMOR treaty.
\end{remark}

An essential condition of Theorem~3.2 in \cite{rhee2017sample} is that the sets $\closeness{\lcrhittingset} \cap \atmoststeps[{\ratefunction[\lcrhittingset]}]$ and $\closeness{\ecomorhittingset} \cap \atmoststeps[{\ratefunction[\ecomorhittingset]}]$ are bounded away from \atmoststeps[{\ratefunction[\lcrhittingset] - 1}] and \atmoststeps[{\ratefunction[\ecomorhittingset] - 1}], respectively. Verifying this condition allows us then to derive the result \eqref{Eq. Sample path large deviations theorem} for both treaties. We can directly use the value of the rate function in the following result due to \Cref{Lemma: The rate function}.
\begin{lemma}[Bounded away property]
The sets $\closeness{\lcrhittingset} \cap \atmoststeps[\threshold+1]$ and $\closeness{\ecomorhittingset} \cap \atmoststeps[\threshold+1]$ are bounded away from \atmoststeps[\threshold] for some $\smallnumber>0$.
\end{lemma}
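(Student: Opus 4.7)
The plan is to exploit the Lipschitz continuity of $\lcrmappingsymbol$ and $\ecomormappingsymbol$ established in the preceding lemma, together with the fact, implicit in the proof of \Cref{Lemma: The rate function}, that both mappings take the constant value $\levydrift_-$ on the entire subspace $\atmoststeps[\threshold]$. Once this is in hand, any $\xi \in \closeness{\lcrhittingset}$ must satisfy $\lcrmapping[\xi] \geq \crosslevel - \lcrlipschitz \smallnumber$, whereas $\lcrmapping[\eta] = \levydrift_-$ for every $\eta \in \atmoststeps[\threshold]$; a second application of Lipschitz continuity then forces $\distance[\xi, \eta]$ to be uniformly bounded away from zero as soon as $\smallnumber$ is chosen small enough.

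The first step is to verify the identity $\lcrmapping[\eta] = \ecomormapping[\eta] = \levydrift_-$ for every $\eta \in \atmoststeps[\threshold]$. For such $\eta$ the number of jumps in $[0,t]$ is at most $\threshold$, so one may select all of them among the $\threshold$ (respectively $\threshold+1$) times in the definition of $\summaxjump[t]{\threshold}{\eta}$ (respectively $\summaxjump[t]{\threshold+1}{\eta}$), while the remaining slots contribute nothing since non-jump times produce zero increments. This yields $\summaxjump[t]{\threshold}{\eta} = \summaxjump[t]{\threshold+1}{\eta} = \eta(t)$ for all $t \in [0,1]$; substituting into \Cref{Eq. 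Mapping LCR,Eq. Mapping ECOMOR} makes the step-function contributions cancel, and both suprema reduce to $\sup_{t \in [0,1]}(-\levydrift t) = \levydrift_-$.

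The second step is the distance estimate. Fix $\xi \in \closeness{\lcrhittingset} \cap \atmoststeps[\threshold+1]$; since $\distance[\xi, \lcrhittingset] \leq \smallnumber$, there exists $\zeta \in \lcrhittingset$ with $\distance[\xi, \zeta]$ arbitrarily close to $\smallnumber$, and Lipschitz continuity together with $\lcrmapping[\zeta] \geq \crosslevel$ gives $\lcrmapping[\xi] \geq \crosslevel - \lcrlipschitz \smallnumber$. Applying the Lipschitz inequality once more to the pair $(\xi, \eta)$ with $\eta \in \atmoststeps[\threshold]$ yields
$$\distance[\xi, \eta] \;\geq\; \frac{\lcrmapping[\xi] - \lcrmapping[\eta]}{\lcrlipschitz} \;\geq\; \frac{\crosslevel - \levydrift_- - \lcrlipschitz \smallnumber}{\lcrlipschitz}.$$
The standing hypotheses $\crosslevel > 0$ and $\crosslevel + \levydrift > 0$ imply $\crosslevel > \levydrift_-$, so the choice $\smallnumber := (\crosslevel - \levydrift_-)/(2\lcrlipschitz)$ produces the uniform lower bound $(\crosslevel - \levydrift_-)/(2\lcrlipschitz) > 0$. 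The ECOMOR case is handled identically after replacing $\lcrlipschitz$ by $\ecomorlipschitz$. I do not anticipate a serious obstacle: the argument is driven entirely by Lipschitz continuity and the elementary identity $\summaxjump[t]{\threshold}{\eta} = \eta(t)$ on $\atmoststeps[\threshold]$, and the only mild care needed is to confirm that these identities indeed collapse both mappings to the constant $\levydrift_-$ on the latter subspace.
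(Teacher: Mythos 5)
Your proof is correct and takes a genuinely different route from the paper's. Both proofs open with the same observation — by Lipschitz continuity, $\xi \in \closeness{\lcrhittingset}$ forces $\lcrmapping[\xi] \geq \crosslevel - \lcrlipschitz\smallnumber$ — but they diverge afterwards. The paper parametrizes $\xi \in \atmoststeps[\threshold+1]$ as a step function with ordered jump sizes $\maxjump[1] \geq \dots \geq \maxjump[\threshold+1] \geq 0$, uses the bound $\lcrmapping[\xi] \leq \maxjump[\threshold+1] + \levydrift_-$ derived in the proof of the rate-function lemma to conclude $\maxjump[\threshold+1] \geq (\crosslevel - \levydrift_-)/2$ for $\smallnumber$ small, and then appeals (implicitly) to the standard fact that step functions with $\threshold+1$ jumps all bounded below by a fixed positive constant are $J_1$-bounded away from $\atmoststeps[\threshold]$. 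You instead note that $\lcrmappingsymbol$ and $\ecomormappingsymbol$ are \emph{constant}, equal to $\levydrift_-$, on all of $\atmoststeps[\threshold]$ (which is indeed what the first part of the rate-function lemma's proof establishes for $\exactsteps[k]$, $k \leq \threshold$), and then apply the Lipschitz inequality a second time to bound $\distance[\xi,\eta]$ directly from below for any $\eta \in \atmoststeps[\threshold]$. This sidesteps the jump-size parametrization and the implicit geometric fact entirely, and in fact proves the stronger statement that $\closeness{\lcrhittingset}$ and $\closeness{\ecomorhittingset}$ themselves — not only their intersections with $\atmoststeps[\threshold+1]$ — are bounded away from $\atmoststeps[\threshold]$. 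Your verification that the hypotheses $\crosslevel > 0$ and $\crosslevel + \levydrift > 0$ jointly imply $\crosslevel > \levydrift_-$ is exactly right and is the same positivity used by the paper to make its choice of $\smallnumber$ work. In short, this is a cleaner argument that buys generality (no parametrization of $\atmoststeps[\threshold+1]$ needed) at essentially no cost.
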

\begin{proof}
To simplify the notation in the proof, we write \lcrhittingsetsymbol instead of \lcrhittingset and \ecomorhittingsetsymbol instead of \ecomorhittingset, while the notation \closeness{\lcrhittingsetsymbol}, \closeness{\ecomorhittingsetsymbol} follows naturally.

We start by showing that $\closeness{\lcrhittingsetsymbol} \cap \atmoststeps[\threshold+1]$ is bounded away from \atmoststeps[\threshold] for some $\smallnumber>0$. Thanks to \Cref{Lemma. Useful inequality about the sum of the max jump}, we have that $\closeness{\lcrhittingsetsymbol} \subset \lcrhittingsetsymbol(\smallnumber)$, where $\lcrhittingsetsymbol(\smallnumber) = \lcrmappingsymbol^{-1}\big( [\crosslevel - (\abs[\levydrift] + 2 \threshold + 1)\smallnumber ,\infty) \big)$. Hence, it suffices to show that $\lcrhittingsetsymbol(\smallnumber) \cap \atmoststeps[\threshold+1]$ is bounded away from \atmoststeps[\threshold]. Let $\stepfunctionsymbol \in \lcrhittingsetsymbol(\smallnumber) \cap \atmoststeps[\threshold+1]$. Since $\stepfunctionsymbol \in \atmoststeps[\threshold+1]$, we can write $\stepfunctionsymbol = \sum_{i = 1}^{\threshold+1} \maxjump \indicatorfunction[\{\maxjumptime,1\}]$ with $\maxjump[1] \geq \maxjump[2] \geq \dots \geq \maxjump[\threshold+1] \geq 0$, for which it holds that $\lcrmapping \leq \maxjump[\threshold+1] - \levydrift_+ \max\{ \maxjumptime[1],\dots,\maxjumptime[\threshold+1] \}+\levydrift_- \leq \maxjump[\threshold+1] + \levydrift_-$ according to the proof of \Cref{Lemma: The rate function}. Furthermore, $\stepfunctionsymbol \in \lcrhittingsetsymbol(\smallnumber) \Leftrightarrow \lcrmapping \geq \crosslevel - (\abs[\levydrift] + 2 \threshold + 1)\smallnumber$. Combining the two inequalities, we obtain that $\maxjump[\threshold+1] \geq (\crosslevel - \levydrift_-) - (\abs[\levydrift] + 2 \threshold + 1)\smallnumber \geq (\crosslevel-\levydrift_-)/2$, for $\smallnumber \leq (\crosslevel-\levydrift_-)/2 (\abs[\levydrift] + 2 \threshold + 1)$. In other words, for $\smallnumber \leq (\crosslevel-\levydrift_-)/2 (\abs[\levydrift] + 2 \threshold + 1)$, $\stepfunctionsymbol \in \lcrhittingsetsymbol(\smallnumber) \cap \exactsteps[\threshold+1] \subset \lcrhittingsetsymbol(\smallnumber) \cap \atmoststeps[\threshold+1]$ with jump sizes bounded from below by $(\crosslevel-\levydrift_-)/2$, which implies that $\lcrhittingsetsymbol(\smallnumber) \cap \atmoststeps[\threshold+1]$ is bounded away from \atmoststeps[\threshold] .

In a similar manner, it suffices to show that $\ecomorhittingsetsymbol(\smallnumber) \cap \atmoststeps[\threshold+1]$ is bounded away from \atmoststeps[\threshold], where $\ecomorhittingsetsymbol(\smallnumber) = \ecomormappingsymbol^{-1}\big( [\crosslevel - (\abs[\levydrift] +  4 \threshold^2 + 4 \threshold + 1)\smallnumber ,\infty) \big)$. Let $\stepfunctionsymbol \in \ecomorhittingsetsymbol(\smallnumber) \cap \atmoststeps[\threshold+1]$. Since $\stepfunctionsymbol \in \atmoststeps[\threshold+1]$, we can write $\stepfunctionsymbol = \sum_{i = 1}^{\threshold+1} \maxjump \indicatorfunction[\{\maxjumptime,1\}]$ with $\maxjump[1] \geq \maxjump[2] \geq \dots \geq \maxjump[\threshold+1] \geq 0$, for which it holds that $\ecomormapping \leq (\threshold+1)\maxjump[\threshold+1] - \levydrift_+ \max\{ \maxjumptime[1],\dots,\maxjumptime[\threshold+1] \} + \levydrift_- \leq (\threshold+1)\maxjump[\threshold+1] + \levydrift_-$. Furthermore, $\stepfunctionsymbol \in \ecomorhittingsetsymbol(\smallnumber) \Leftrightarrow \ecomormapping \geq \crosslevel - (\abs[\levydrift] +  4 \threshold^2 + 4 \threshold + 1)\smallnumber$. Combining the two inequalities, we obtain that $(\threshold+1)\maxjump[\threshold+1] \geq (\crosslevel-\levydrift_-) - (\abs[\levydrift] +  4 \threshold^2 + 4 \threshold + 1)\smallnumber \geq (\crosslevel-\levydrift_-)/2$, for $\smallnumber \leq (\crosslevel-\levydrift_-)/2 (\abs[\levydrift] +  4 \threshold^2 + 4 \threshold + 1)$. In other words, the jump sizes of \stepfunctionsymbol are bounded from below by $(\crosslevel-\levydrift_-)/2(\threshold+1)$, which implies that $\ecomorhittingsetsymbol(\smallnumber) \cap \atmoststeps[\threshold+1]$ is bounded away from \atmoststeps[\threshold] for $\smallnumber \leq (\crosslevel-\levydrift_-)/2 (\abs[\levydrift] + 4 \threshold^2 + 4 \threshold + 1)$, and the proof is complete.
\end{proof}

Let \LCRpreconstant:= \preconstant[\lcrhittingset] and \ECOMORpreconstant:= \preconstant[\ecomorhittingset]. According to Section~3.1 in \cite{rhee2017sample}, the $\liminf$ and $\limsup$ in \Cref{Eq. Sample path large deviations theorem} yield the same result when
\begin{equation*}
    \lowerconstant = \limitconstant= \upperconstant.
\end{equation*}
However, the above equality holds when the set \randomset is \measureconstant[\ratefunction]-continuous, i.e.\ $\measureconstant[\ratefunction](\boundaryset)=0$, where the boundary $\boundaryset = \closedset \setminus \openset$ of a set \randomset is the closure of \randomset without its interior. We prove in the next lemma that the sets \lcrhittingset and \ecomorhittingset are both \measureconstant[\threshold+1]-continuous.

\begin{lemma}[Equality of the limits]
The sets \lcrhittingset and \ecomorhittingset are \measureconstant[\threshold+1]-continuous, i.e.\ $\preconstant[{\boundaryset[\lcrhittingset]}] = \preconstant[{\boundaryset[\ecomorhittingset]}] = 0$.
\end{lemma}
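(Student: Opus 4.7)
The plan is to exploit the Lipschitz continuity of \lcrmappingsymbol and \ecomormappingsymbol (from the preceding lemma) together with the absolute continuity of \measurepowerexponent{} on $(0,\infty)$. Concretely, since \lcrmappingsymbol is continuous and $\lcrhittingset = \lcrmappingsymbol^{-1}\big([\crosslevel,\infty)\big)$, one has $\closedset[\lcrhittingset] \subseteq \lcrmappingsymbol^{-1}\big([\crosslevel,\infty)\big)$ and $\openset[\lcrhittingset] \supseteq \lcrmappingsymbol^{-1}\big((\crosslevel,\infty)\big)$, hence $\boundaryset[\lcrhittingset] \subseteq \lcrmappingsymbol^{-1}(\{\crosslevel\})$. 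The analogous inclusion $\boundaryset[\ecomorhittingset] \subseteq \ecomormappingsymbol^{-1}(\{\crosslevel\})$ follows from the continuity of \ecomormappingsymbol in the same way.

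Next, since \measureconstant[\threshold+1] is concentrated on \exactsteps[\threshold+1], it suffices to bound $\measureconstant[\threshold+1]\big(\lcrmappingsymbol^{-1}(\{\crosslevel\}) \cap \exactsteps[\threshold+1]\big)$ and its ECOMOR counterpart. The explicit evaluations already carried out in the proof of the rate-function lemma give, for $\stepfunctionsymbol = \sum_{i=1}^{\threshold+1} \maxjump \indicatorfunction[\{\maxjumptime,1\}] \in \exactsteps[\threshold+1]$,
\begin{equation*}
    \lcrmapping = \maxjump[\threshold+1] - \levydrift_+ \max_i \maxjumptime + \levydrift_-, \qquad \ecomormapping = (\threshold+1)\maxjump[\threshold+1] - \levydrift_+ \max_i \maxjumptime + \levydrift_-,
\end{equation*}
at least whenever the supremum is strictly positive, which is guaranteed on the level sets $\{\lcrmapping = \crosslevel\}$ and $\{\ecomormapping = \crosslevel\}$ because $\crosslevel > 0$. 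Consequently $\lcrmapping = \crosslevel$ forces $\maxjump[\threshold+1]$ to equal the value $\crosslevel + \levydrift_+ \max_i \maxjumptime - \levydrift_-$, which depends only on $(\maxjumptime[1],\dots,\maxjumptime[\threshold+1])$, and similarly $\ecomormapping = \crosslevel$ pins $\maxjump[\threshold+1]$ to a single value (with an extra denominator $\threshold+1$).

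Invoking the definition of \measureconstant[\threshold+1] as the expectation of $\measurepowerexponent{\threshold+1}$ restricted to \realdecreasing[\threshold+1] against independent uniforms $\uniform[1],\dots,\uniform[\threshold+1]$ on $[0,1]$, Fubini reduces the problem to showing that for each fixed realisation of the uniforms, the set $\{\vect{y} \in \realdecreasing[\threshold+1] : y_{\threshold+1} = \mathrm{const}\}$ carries no $\measurepowerexponent{\threshold+1}$-mass. This is immediate because \measurepowerexponent{} is absolutely continuous on $(0,\infty)$ with density $\powerindex x^{-\powerindex-1}$, so its $(\threshold+1)$-fold product restricted to \realdecreasing[\threshold+1] admits a Lebesgue density and annihilates every hyperplane of codimension one. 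Integrating over the uniforms yields $\preconstant[{\boundaryset[\lcrhittingset]}] = 0$, and the identical Fubini argument with the ECOMOR identity gives $\preconstant[{\boundaryset[\ecomorhittingset]}] = 0$.

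The only mildly non-routine step is the first, where the Lipschitz continuity of \lcrmappingsymbol and \ecomormappingsymbol is used to squeeze each boundary into a single level set of a continuous real-valued functional. Once those level sets have been identified with hyperplanes via the explicit formulas from the rate-function lemma, the non-atomicity of \measurepowerexponent{} closes the argument, and I anticipate no substantive obstacle.
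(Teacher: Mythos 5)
Your proof is correct and follows essentially the same route as the paper's: both arguments reduce the boundary to a subset of the level set $\lcrmappingsymbol^{-1}(\{\crosslevel\})$ (resp.\ $\ecomormappingsymbol^{-1}(\{\crosslevel\})$) by continuity, push it into the parameter space $\coupledset[\threshold+1]$ via the identification of $\exactsteps[\threshold+1]$ with jump sizes/times, and then observe that the resulting codimension-one constraint on $\maxjump[\threshold+1]$ is annihilated by the Lebesgue-absolutely-continuous measure $\measurepowerexponent{\threshold+1}$. The only cosmetic difference is that you spell out the Fubini step and the hyperplane argument, whereas the paper states the zero-Lebesgue-measure conclusion more tersely; also note that plain continuity of the mappings (not Lipschitz continuity) is all that is needed for the inclusion $\boundaryset[\lcrhittingset] \subseteq \lcrmappingsymbol^{-1}(\{\crosslevel\})$.
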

\begin{proof}
To simplify the notation in the proof, we write again \lcrhittingsetsymbol instead of \lcrhittingset and \ecomorhittingsetsymbol instead of \ecomorhittingset, while the notation \openset[\lcrhittingsetsymbol], \openset[\ecomorhittingsetsymbol], \closedset[\lcrhittingsetsymbol], \closedset[\ecomorhittingsetsymbol]  follows naturally.

We start by showing the \measureconstant[\threshold+1]-continuity of \lcrhittingsetsymbol. In compliance with the notation introduced in \Cref{Section: Preliminaries on the Skorokhod topology and notation}, we consider the function $\inversestepmappingsymbol[\threshold+1]: \exactsteps \to \coupledset$ such that
\begin{align*}
    \inversestepmapping[\threshold+1]{\openset[\lcrhittingsetsymbol]}
    &= \inversestepmapping[\threshold+1]{\lcrmappingsymbol^{-1}\big( (\crosslevel,\infty) \big)}
    = \Big\{ (\vect{\maxjumpsymbol}, \vect{\maxjumptimesymbol}) \in \coupledset : \maxjump[\threshold+1] > \crosslevel + \levydrift_+ \max \{ \maxjumptime[1],\dots,\maxjumptime[\threshold+1] \} - \levydrift_- \Big\}, \\
    \inversestepmapping[\threshold+1]{\closedset[\lcrhittingsetsymbol]}
    &= \inversestepmapping[\threshold+1]{\lcrmappingsymbol^{-1}\big( [\crosslevel,\infty) \big)}
    = \Big\{ (\vect{\maxjumpsymbol}, \vect{\maxjumptimesymbol}) \in \coupledset : \maxjump[\threshold+1] \geq \crosslevel + \levydrift_+ \max \{ \maxjumptime[1],\dots,\maxjumptime[\threshold+1] \} - \levydrift_- \Big\}.
\end{align*}
Obviously, the set $\inversestepmapping[\threshold+1]{\closedset[\lcrhittingsetsymbol]} \setminus \inversestepmapping[\threshold+1]{\openset[\lcrhittingsetsymbol]}$ has zero Lebesgue measure. Combining this with $\openset[\lcrhittingsetsymbol] \subseteq \lcrhittingsetsymbol \subseteq \closedset[\lcrhittingsetsymbol]$ and \lcrmappingsymbol being a continuous function, we conclude that $\preconstant[{\boundaryset[\lcrhittingsetsymbol]}] = 0$, i.e.\ \lcrhittingsetsymbol is \measureconstant[\threshold+1]-continuous.
To prove the \measureconstant[\threshold+1]-continuity of \ecomorhittingsetsymbol, it suffices to observe that the set $\inversestepmapping[\threshold+1]{\closedset[\ecomorhittingsetsymbol]} \setminus \inversestepmapping[\threshold+1]{\openset[\ecomorhittingsetsymbol]}$ has zero Lebesgue measure, where
\begin{align*}
    \inversestepmapping[\threshold+1]{\openset[\ecomorhittingsetsymbol]}
    &= \inversestepmapping[\threshold+1]{\ecomormappingsymbol^{-1}\big( (\crosslevel,\infty) \big)}\\
    &= \Big\{ (\vect{\maxjumpsymbol}, \vect{\maxjumptimesymbol}) \in \coupledset : \maxjump[\threshold+1] > \big(\crosslevel + \levydrift_+ \max\{ \maxjumptime[1],\dots,\maxjumptime[\threshold+1] \} - \levydrift_- \big)/(\threshold+1) \Big\}, \\
    \inversestepmapping[\threshold+1]{\closedset[\ecomorhittingsetsymbol]}
    &= \inversestepmapping[\threshold+1]{\ecomormappingsymbol^{-1}\big( [\crosslevel,\infty) \big)}\\
    &= \Big\{ (\vect{\maxjumpsymbol}, \vect{\maxjumptimesymbol}) \in \coupledset : \maxjump[\threshold+1] \geq \big(\crosslevel + \levydrift_+ \max\{ \maxjumptime[1],\dots,\maxjumptime[\threshold+1] \} - \levydrift_- \big)/(\threshold+1) \Big\},
\end{align*}
which follows by the same reasoning.
\end{proof}

We calculate now the pre\-/constants \limitconstant[\lcrhittingset] and \limitconstant[\ecomorhittingset].

\begin{lemma}[Calculation of the pre-constant]\label{Lemma: The pre-constant}
The constants \LCRpreconstant and \ECOMORpreconstant are given by
\begin{alignat*}{2}
    \LCRpreconstant
    =&  \quad \frac{1}{(\threshold+1)!}
    &&\times
    \begin{cases}
    \crosslevel^{-(\threshold + 1) \powerindex} \cdot \prescript{}{2}{F}_1[\threshold+1, (\threshold + 1) \powerindex; \threshold + 2; -\levydrift/\crosslevel], & \levydrift>0, \\
    (\crosslevel+\levydrift)^{-(\threshold+1)\powerindex}, & \levydrift<0.
    \end{cases}
    , \\
    \ECOMORpreconstant
    =& \frac{(\threshold+1)^{(\threshold+1)\powerindex}}{(\threshold+1)!}
    &&\times
    \begin{cases}
    \crosslevel^{-(\threshold + 1) \powerindex} \cdot \prescript{}{2}{F}_1[\threshold+1, (\threshold + 1) \powerindex; \threshold + 2; -\levydrift/\crosslevel], & \levydrift>0, \\
    (\crosslevel+\levydrift)^{-(\threshold+1)\powerindex}, & \levydrift<0.
    \end{cases}
\end{alignat*}
\end{lemma}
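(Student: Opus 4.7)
The plan is to unpack the definition of $\preconstant[\randomset] = \measureconstant[\threshold+1](\randomset)$ given in \Cref{Eq. The measure of the constants} and evaluate it explicitly using the characterization of the minimal step functions in $\lcrhittingset$ and $\ecomorhittingset$ obtained in \Cref{Remark: Form of the minimal step function}. Concretely, for the LCR treaty, I would apply \Cref{Eq. The measure of the constants} with $j = \threshold+1$ to write
\begin{equation*}
    \LCRpreconstant = \e\Big[\measurepowerexponent{\threshold+1}\big\{\vect{y} \in \realdecreasing[\threshold+1] : \textstyle\sum_{i=1}^{\threshold+1} y_i \indicatorfunction[\{\uniform,1\}] \in \lcrhittingset \big\}\Big],
\end{equation*}
where the $\uniform$ are i.i.d.\ uniform on $[0,1]$. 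By \Cref{Remark: Form of the minimal step function}, the condition $\sum_{i=1}^{\threshold+1} y_i \indicatorfunction[\{\uniform,1\}] \in \lcrhittingset$ on $\realdecreasing[\threshold+1]$ reduces to $y_{\threshold+1} \geq \beta(U^*) := \crosslevel + \levydrift_+ U^* - \levydrift_-$, where $U^* = \max\{\uniform[1],\dots,\uniform[\threshold+1]\}$.

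Next I would evaluate the inner $\measurepowerexponent{\threshold+1}$-mass. Since $\measurepowerexponent{}(dx) = \powerindex x^{-\powerindex-1}dx$ so that $\measurepowerexponent{}(\beta,\infty) = \beta^{-\powerindex}$, and $\measurepowerexponent{\threshold+1}$ is the product measure restricted to the ordered cone $\realdecreasing[\threshold+1]$, a standard symmetrization argument gives
\begin{equation*}
    \measurepowerexponent{\threshold+1}\{y_1 \geq \cdots \geq y_{\threshold+1} \geq \beta\}
    = \tfrac{1}{(\threshold+1)!}\big(\measurepowerexponent{}(\beta,\infty)\big)^{\threshold+1}
    = \tfrac{1}{(\threshold+1)!}\beta^{-(\threshold+1)\powerindex},
\end{equation*}
provided $\beta > 0$, which is guaranteed by the hypothesis $\levydrift+\crosslevel > 0$. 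Substituting back yields $\LCRpreconstant = \frac{1}{(\threshold+1)!}\e[\beta(U^*)^{-(\threshold+1)\powerindex}]$. If $\levydrift < 0$, then $\beta(U^*) = \crosslevel + \levydrift$ is deterministic and the formula follows immediately. If $\levydrift > 0$, I use that $U^*$ has density $(\threshold+1)u^{\threshold}$ on $[0,1]$ to obtain
\begin{equation*}
    \LCRpreconstant
    = \tfrac{1}{\threshold!}\int_0^1 u^{\threshold}(\crosslevel + \levydrift u)^{-(\threshold+1)\powerindex}\,du
    = \tfrac{\crosslevel^{-(\threshold+1)\powerindex}}{\threshold!}\int_0^1 u^{\threshold}\big(1 - (-\levydrift/\crosslevel)u\big)^{-(\threshold+1)\powerindex}\,du.
\end{equation*}

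The final step, which I expect to be the main technical point, is to identify this last integral as a hypergeometric function through Euler's integral representation
\begin{equation*}
    \prescript{}{2}F_1(a,b;c;z) = \tfrac{\Gamma(c)}{\Gamma(b)\Gamma(c-b)}\int_0^1 t^{b-1}(1-t)^{c-b-1}(1-zt)^{-a}\,dt,
\end{equation*}
choosing $a = (\threshold+1)\powerindex$, $b = \threshold+1$, $c = \threshold+2$, $z = -\levydrift/\crosslevel$, and invoking symmetry in the first two arguments; the prefactor simplifies to $\threshold+1$, yielding exactly the claimed expression with $(\threshold+1)!$ in the denominator. For the ECOMOR case, the reasoning is identical: by \Cref{Remark: Form of the minimal step function}, the threshold becomes $\beta(U^*)/(\threshold+1)$, so $\beta^{-(\threshold+1)\powerindex}$ is multiplied by $(\threshold+1)^{(\threshold+1)\powerindex}$, recovering the stated formula for $\ECOMORpreconstant$.
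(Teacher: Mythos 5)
Your proposal is correct and follows essentially the same route as the paper: unpack \Cref{Eq. The measure of the constants}, use the characterization from \Cref{Remark: Form of the minimal step function} to reduce the event to $y_{\threshold+1} \geq \beta(U^*)$, evaluate the $\measurepowerexponent{\threshold+1}$-mass of the ordered cone, integrate against the density $(\threshold+1)t^\threshold$ of $U^* = \max\{\uniform[1],\dots,\uniform[\threshold+1]\}$, and identify the resulting Beta-type integral with the hypergeometric function. The only cosmetic difference is that the paper evaluates the innermost ordered integral by explicit recursion (the quantity $\recursive[\threshold]$) rather than the symmetrization argument you invoke, and it does not spell out the Euler integral representation step, which you make explicit; both are routine and lead to the same formula.
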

\begin{proof}
Recall that \LCRpreconstant:= \preconstant[\lcrhittingset] and \ECOMORpreconstant:= \preconstant[\ecomorhittingset]. To calculate these constants, we use the definition of the measure \preconstant[\bullet] in \Cref{Eq. The measure of the constants}. We start with \LCRpreconstant. It is known that for $\uniform[1],\dots,\uniform[\threshold+1] \sim \uniformdistribution(0,1)$, the distribution of the r.v.\ $\max \{ \uniform[1],\dots,\uniform[\threshold+1] \}$ is given by the formula $\pr(\max \{ \uniform[1],\dots,\uniform[\threshold+1] \leq t) = t^{\threshold+1}$. Furthermore, by using that $\int_b^{+\infty} \powerindex y^{-n \powerindex - 1} dy = b^{-n \powerindex}/n$ with $b>0$, we calculate recursively the following multiple integrals for $n \in \naturals$ and positive $y_i$'s
\begin{align*}
    \recursive
    &= \int\limits_{y_1 \geq \dots \geq y_{n+1}} \prod_{i=1}^n \powerindex y_i^{-\powerindex-1}
     dy_1 \dots dy_n \\
    &= \int\limits_{y_n = y_{n+1}}^{+\infty} \int\limits_{y_{n-1} = y_n}^{+\infty} \dots \int\limits_{y_2 = y_3}^{+\infty} \prod_{i=2}^{n} \powerindex y_i^{-\powerindex-1}
    \underbrace{\Bigg(
    \int\limits_{y_1 = y_2}^{+\infty} \powerindex y_1^{-\powerindex-1} dy_1 \Bigg)}_{= y_2^{-\powerindex}} dy_2 \dots dy_n \\
    &= \int\limits_{y_n = y_{n+1}}^{+\infty} \dots \int\limits_{y_3 = y_4}^{+\infty} \prod_{i=3}^{n} \powerindex y_i^{-\powerindex-1}
    \underbrace{\Bigg( \int\limits_{y_2 = y_3}^{+\infty}  \powerindex y_2^{-2\powerindex-1} dy_2 \Bigg)}_{= y_3^{-2 \powerindex}/2} dy_3 \dots dy_n
    = \dots = \frac{1}{n!}(y_{n+1})^{- n \powerindex}.
\end{align*}
Consequently, in case $\levydrift>0$, we obtain by virtue of \Cref{Remark: Form of the minimal step function}
\begin{align*}
    \LCRpreconstant
    &= \e \big[ \measurepowerexponent{\threshold+1} \{ \vect{y} \in \reals[\threshold+1]_+ : \sum_{i=1}^{\threshold+1} y_i \indicatorfunction[\{\uniform,1\}] \in \lcrhittingset \} \big] \\
    &= \e \left[ \int\limits_{y_1 \geq \dots \geq y_{\threshold+1}>0} \prod_{i=1}^{\threshold+1} \powerindex y_i^{-\powerindex-1}
    \indicatorfunction[{ \{ y_{\threshold+1} \geq \crosslevel + \levydrift \max \{ \uniform[1],\dots,\uniform[\threshold+1]\} \} }] dy_1 \dots dy_{\threshold+1} \right] \\
    &= \int\limits_{t \in [0,1]} \int\limits_{y_1 \geq \dots \geq y_{\threshold+1}>0} \prod_{i=1}^{\threshold+1} \powerindex y_i^{-\powerindex-1}
    \indicatorfunction[\{y_{\threshold+1} \geq \crosslevel + \levydrift t \}\}] (\threshold+1) t^\threshold dy_1 \dots dy_{\threshold+1} dt\\
    &= \int\limits_{t \in [0,1]} \int\limits_{ y_{\threshold+1}>0} \recursive[\threshold] \powerindex y_{\threshold+1}^{-\powerindex-1}
    \indicatorfunction[\{y_{\threshold+1} \geq \crosslevel + \levydrift t \} \}] (\threshold+1) t^\threshold dy_{\threshold+1} dt\\
    &= \int\limits_{t \in [0,1]} \int\limits_{ y_{\threshold+1}=\crosslevel + \levydrift t }^{+\infty} \frac{1}{\threshold!}(y_{\threshold+1})^{- \threshold \powerindex} \powerindex y_{\threshold+1}^{-\powerindex-1}
    (\threshold+1) t^\threshold dy_{\threshold+1} dt\\
    &= \frac{\threshold+1}{\threshold!}
    \int\limits_0^1 t^\threshold \Bigg( \int\limits_{\crosslevel + \levydrift t }^{+\infty} \powerindex (y_{\threshold+1})^{-(\threshold+1)\powerindex-1} dy_{\threshold+1} \Bigg) dt
    = \frac{1}{\threshold!} \int\limits_0^1 t^\threshold (\crosslevel + \levydrift t)^{-(\threshold+1)\powerindex} dt\\
    &=\frac{\crosslevel^{-(\threshold + 1) \powerindex}}{(\threshold+1)!} \cdot \prescript{}{2}{F}_1[\threshold+1, (\threshold + 1) \powerindex; \threshold + 2; -\levydrift/\crosslevel].
\end{align*}
Analogously, we find
\begin{align*}
    \ECOMORpreconstant
    &= \e \big[ \measurepowerexponent{\threshold+1} \{ \vect{y} \in \reals[\threshold+1]_+ : \sum_{i=1}^{\threshold+1} y_i \indicatorfunction[\{\uniform,1\}] \in \ecomorhittingset \} \big] \\
    &=\int\limits_{y_1 \geq \dots \geq y_{\threshold+1}>0} \prod_{i=1}^{\threshold+1} \powerindex y_i^{-\powerindex-1}
    \indicatorfunction[{ \{y_{\threshold+1} \geq (\crosslevel + \levydrift \max \{ \uniform[1],\dots,\uniform[\threshold+1]\} )/(\threshold+1) \} }] dy_1 \dots dy_{\threshold+1} \\
    &= \int\limits_{t \in [0,1]} \int\limits_{ y_{\threshold+1}>0} \recursive[\threshold] \powerindex y_{\threshold+1}^{-\powerindex-1}
    \indicatorfunction[\{y_{\threshold+1} \geq (\crosslevel + \levydrift t)/(\threshold+1) \} \}] (\threshold+1) t^\threshold dy_{\threshold+1} dt\\
    &= \frac{1}{\threshold!} \int\limits_0^1 t^\threshold \left( \frac{\crosslevel + \levydrift t}{\threshold+1} \right)^{-(\threshold+1)\powerindex} dt
    = (\threshold+1)^{(\threshold+1)\powerindex} \frac{1}{\threshold!} \int\limits_0^1 t^\threshold ( \crosslevel + \levydrift t )^{-(\threshold+1)\powerindex} dt\\
    &= (\threshold+1)^{(\threshold+1)\powerindex}  \frac{\crosslevel^{-(\threshold + 1) \powerindex}}{(\threshold+1)!} \cdot \prescript{}{2}{F}_1[\threshold+1, (\threshold + 1) \powerindex; \threshold + 2; -\levydrift/\crosslevel].
\end{align*}
In case $\levydrift<0$, the coefficients simplify to
\begin{align*}
    \LCRpreconstant
    &= \e \left[ \int\limits_{y_1 \geq \dots \geq y_{\threshold+1}>0} \prod_{i=1}^{\threshold+1} \powerindex y_i^{-\powerindex-1}
    \indicatorfunction[\{y_{\threshold+1} \geq \crosslevel + \levydrift \}] dy_1 \dots dy_{\threshold+1} \right]
    = \int\limits_{ \crosslevel + \levydrift}^{+\infty} \recursive[\threshold] \powerindex y_{\threshold+1}^{-\powerindex-1} dy_{\threshold+1}\\
    &= \ldots = \frac{1}{(\threshold+1)!} (\crosslevel+\levydrift)^{-(\threshold+1)\powerindex}, \quad \text{and,}\\
    \ECOMORpreconstant
    &= (\threshold+1)^{(\threshold+1)\powerindex} \frac{1}{(\threshold+1)!} (\crosslevel+\levydrift)^{-(\threshold+1)\powerindex}.
\end{align*}
\end{proof}

\begin{remark}\label{Remark: Alternative expression for the preconstants}
When $\levydrift>0$, the coefficients \LCRpreconstant and \ECOMORpreconstant can be equivalently expressed in terms of finite sums involving the Gamma function. More precisely, by applying \threshold times integration by parts, we calculate for $k>\threshold+1$ that
\begin{align*}
     \frac{1}{\threshold!} \int t^\threshold (\crosslevel + \levydrift t)^{-k} dt
     =& \sum_{m=1}^{\threshold+1} \frac{(-1)^{m+1} t^{\threshold+1-m}}{(\threshold+1-m)!} \frac{(\crosslevel + \levydrift t)^{m-k}}{\levydrift^m \prod_{j=1}^m(j-k)}
     = \sum_{m=1}^{\threshold+1} \frac{(-1)^{m+1} t^{\threshold+1-m}}{(\threshold+1-m)!} \frac{(\crosslevel + \levydrift t)^{m-k}}{\levydrift^m (1-k)_m} \\
     =& -\sum_{m=1}^{\threshold+1} \frac{ t^{\threshold+1-m}}{(\threshold+1-m)!} \frac{(\crosslevel + \levydrift t)^{m-k}}{\levydrift^m (k-m)_m} \Rightarrow \\
     \frac{1}{\threshold!} \int\limits_0^1 t^\threshold (\crosslevel + \levydrift t)^{-k} dt
     =& \frac{  \crosslevel^{\threshold+1-k} }{\levydrift^{\threshold+1} (k-\threshold-1)_{\threshold+1}} -\sum_{m=1}^{\threshold+1}
     \frac{(\crosslevel + \levydrift)^{m-k}}{(\threshold+1-m)! \levydrift^m (k-m)_m},
\end{align*}
where $(b)_k = \Gamma(b+k)/\Gamma(b)$ is again the Pochhammer symbol. Thus,

\begin{align*}
    \LCRpreconstant
    &=  \frac{ \crosslevel^{-(\threshold+1)(\powerindex-1)} \Gamma \big( (\threshold+1) \powerindex \big) }{\levydrift^{\threshold+1} \Gamma \big( (\threshold+1)(\powerindex-1) \big)} -\sum_{m=1}^{\threshold+1} \frac{(\crosslevel + \levydrift)^{m-(\threshold+1)\powerindex} \Gamma \big( (\threshold+1) \powerindex \big) }{(\threshold+1-m)! \levydrift^m \Gamma \big( ( \threshold+1)\powerindex - m \big)}, \\
    \ECOMORpreconstant
    &= (\threshold+1)^{(\threshold+1)\powerindex} \left( \frac{ \crosslevel^{-(\threshold+1)(\powerindex-1)} \Gamma \big( (\threshold+1) \powerindex \big) }{\levydrift^{\threshold+1} \Gamma \big( (\threshold+1)(\powerindex-1) \big)} -\sum_{m=1}^{\threshold+1} \frac{(\crosslevel + \levydrift)^{m-(\threshold+1)\powerindex} \Gamma \big( (\threshold+1) \powerindex \big) }{(\threshold+1-m)! \levydrift^m \Gamma \big( ( \threshold+1)\powerindex - m \big)} \right).
\end{align*}
\end{remark}

\begin{remark}\label{Remark: Asymptotic approximation}
In the absence of reinsurance ($\threshold = 0$), the pre\-/constant simplifies to
\begin{equation*}
    \frac{\crosslevel^{-\powerindex +1} - (\crosslevel + \levydrift)^{-\powerindex +1}}{\levydrift (\powerindex - 1)},
\end{equation*}
which can also be derived from existing results, see e.g.\ \cite{asmussen1996large,embrechts1979subexponentiality}.
\end{remark}

Finally, we know from \cite{kyprianou-ILFLPA} that the compound Poisson aggregate claim process $\aggregateclaims = \sum_{i=1}^{\poissonprocess} \claim$ is a special L\'{e}vy process with L\'{e}vy measure $\levymeasure(dx) = \poissonrate \claimsizedis[dx]$, which means that $n \cdot \levymeasure[n,\infty) = \poissonrate n \comclaimsizedis[n] = \poissonrate \slowlyvarying[n] n^{-\powerindex+1}$, $n \in \naturals$. We conclude the proof by combining this result with \Cref{Lemma: The pre-constant} to obtain the expression \eqref{Eq. Expression of main result}.

\section{Numerical implementations}\label{Section: Numerical implementations}
Our primary goal in this section is to verify our asymptotic approximations in \Cref{Theorem: Asymptotic ruin probability} via numerical illustration.
For this purpose, we employ an importance sampling scheme that was developed in \cite{bohan2017efficient} and it is proved to be strongly efficient in the current setting.
We provide a short description of this scheme in \Cref{Appendix}.

We use a shifted Pareto distribution for the claim sizes, i.e.\ $\comclaimsizedis = (x+1)^{-\powerindex}$, $x\geq 0$, and $\e \claim[] = 1/(\powerindex-1)$.
In addition, we calculate the net premiums $\cedentpremium^{L} = \premium - \reinsurerpremium^{L}$ and $\cedentpremium^{E} = \premium - \reinsurerpremium^{E}$ of the insurer after purchasing an LCR or ECOMOR reinsurance for a premium $\reinsurerpremium^{L}$ and $\reinsurerpremium^{E}$, respectively.

We assume here that the reinsurance premiums are determined according to an {\it expected value principle}, see e.g.\ \cite{albrecher-RASA}. Hence, we need to determine $\e \reinsurance$. As the Pareto claims arrive according to a Poisson process with rate \poissonrate, we follow \cite{berliner1972correlations} to obtain 
\begin{align*}
    \e \LCR{\threshold} &= (\poissonrate t)^{1/\powerindex} \sum_{i=1}^\threshold \frac{\gamma(i-1/\powerindex,\poissonrate t)}{\Gamma(i)} - \sum_{i=1}^\threshold \frac{\gamma(i,\poissonrate t)}{\Gamma(i)},\\
    \e \ECOMOR{\threshold} &= (\poissonrate t)^{1/\powerindex} \Bigg( \sum_{i=1}^\threshold \frac{\gamma(i-1/\powerindex,\poissonrate t)}{\Gamma(i)} - \threshold \frac{\gamma(\threshold+1-1/\powerindex,\poissonrate t)}{\Gamma(\threshold+1)} \Bigg) 
    -\Bigg( \sum_{i=1}^\threshold \frac{\gamma(i,\poissonrate t)}{\Gamma(i)} - \threshold \frac{\gamma(\threshold+1,\poissonrate t)}{\Gamma(\threshold+1)} \Bigg),
\end{align*}
where $\displaystyle \gamma(k,s) = \int_0^s e^{-u} u^{k-1}du$ is the lower incomplete gamma function. Thus, if $\insurersafetyloading, \reinsurersafetyloading>0$ are the relative safety loadings imposed by the insurer and reinsurer, respectively, we calculate the annual retained premium \cedentpremium over a period of $n$ years via the formula $\cedentpremium = (1+\insurersafetyloading) \e \aggregateclaims[1] - (1 + \reinsurersafetyloading) \e \reinsurance[n]/n$. Correspondingly,
\begin{align*}
    \cedentpremium^{L} =& (1+\insurersafetyloading)  \frac{\poissonrate}{\powerindex-1} - (1+\reinsurersafetyloading) \Bigg( (\poissonrate n)^{1/\powerindex} \sum_{i=1}^\threshold \frac{\gamma(i-1/\powerindex,\poissonrate n)}{\Gamma(i)} - \sum_{i=1}^\threshold \frac{\gamma(i,\poissonrate n)}{\Gamma(i)} \Bigg) \Biggm/ n,\\
    \cedentpremium^{E} =&  (1+\insurersafetyloading)  \frac{\poissonrate}{\powerindex-1} - (1+\reinsurersafetyloading) (\poissonrate n)^{1/\powerindex} \Bigg( \sum_{i=1}^\threshold \frac{\gamma(i-1/\powerindex,\poissonrate n)}{\Gamma(i)} - \threshold \frac{\gamma(\threshold+1-1/\powerindex,\poissonrate n)}{\Gamma(\threshold+1)} \Bigg) \Bigg/ n \\
    &+(1+\reinsurersafetyloading)\Bigg( \sum_{i=1}^\threshold \frac{\gamma(i,\poissonrate n)}{\Gamma(i)} - \threshold \frac{\gamma(\threshold+1,\poissonrate n)}{\Gamma(\threshold+1)} \Bigg) \Bigg/ n.
\end{align*}

We fix now $n=20$, $\powerindex = 1.5$, $\poissonrate = 10$, $\insurersafetyloading=0.2$, $\reinsurersafetyloading=0.3$ (safety loadings for reinsurance are typically larger than for primary insurance, see \cite{albrecher-RASA}) to obtain the following figures:

\begin{table}[h]
\centering
\begin{adjustbox}{max width=\textwidth}
  \begin{tabular}{|c|cccccc|}\hline
 \threshold &  $\reinsurerpremium^{L}$ & $\reinsurerpremium^{E}$ & $\cedentpremium^{L}$ & $\cedentpremium^{E}$ & $\levydrift_L$  & $\levydrift_E$  \\ \hline
 0 & 0 & 0  & 24 & 24  & 4 & 4 \\
 1 & 4.5309   & 3.0539  & 18.1098 & 20.0299  & -1.8902  & \ 0.0298  \\
 2 & 6.0078   & 4.0719  & 16.1897 & 18.7065  & -3.8102  & -1.2935 \\
 3 & 6.9758   & 4.7505  & 14.9314 & 17.8242  & -5.0686  & -2.1757  \\ \hline
\end{tabular}
\end{adjustbox}
  \caption{Premiums for LCR and ECOMOR treaties for varying \threshold for $n = 20$, $\poissonrate=10$, $\powerindex = 1.5$, $\insurersafetyloading=0.2$, and $\reinsurersafetyloading=0.3$.}\label{Table: The different thresholds}
\end{table}

Finally, we choose the values of \crosslevel such that the asymptotic approximations for LCR and ECOMOR are simultaneously defined. In other words, it should hold that $\crosslevel > \max \{ -\levydrift_L, -\levydrift_E, 0 \}$, where $\levydrift_i = \cedentpremium^i -  \poissonrate/(\powerindex-1)$, $i \in \{ L,E \}$. It is clear from \Cref{Table: The different thresholds} that $\levydrift_L < \levydrift_E$. Therefore, both approximations are simultaneously valid for $\crosslevel > \max \{ -\levydrift_L, 0 \}$.

The results under both LCR and ECOMOR treaties for different combinations of \threshold and \crosslevel are presented in \Cref{Fig. Numerics1,Fig. Numerics2,Fig. Numerics3}. We plot the simulation estimates (circles) together with the large deviation approximation (line) of the rare event probabilities as a function of $n$. Note that the results for $r=0$ can be considered as a sanity check for our simulation study.

We observe that the large deviation results become accurate as $n$ grows, in line with \Cref{Theorem: Asymptotic ruin probability}. It is quite remarkable that in most cases the resulting approximation is already excellent for $n=20$. This corresponds to a time horizon of 20 years for the present insurance application. For fixed $n$, the quality of the asymptotic approximation improves as \crosslevel increases. Finally, we recognize that LCR always leads to lower ruin probabilities than ECOMOR, which is intuitively expected. However, the explicit expression given in \Cref{Theorem: Asymptotic ruin probability}, allows for the first time to quantitatively assess the effects of the model parameters on the resulting ruin probabilities.

\begin{figure}[ht]
	\centering
  \includegraphics[width=0.8\textwidth]{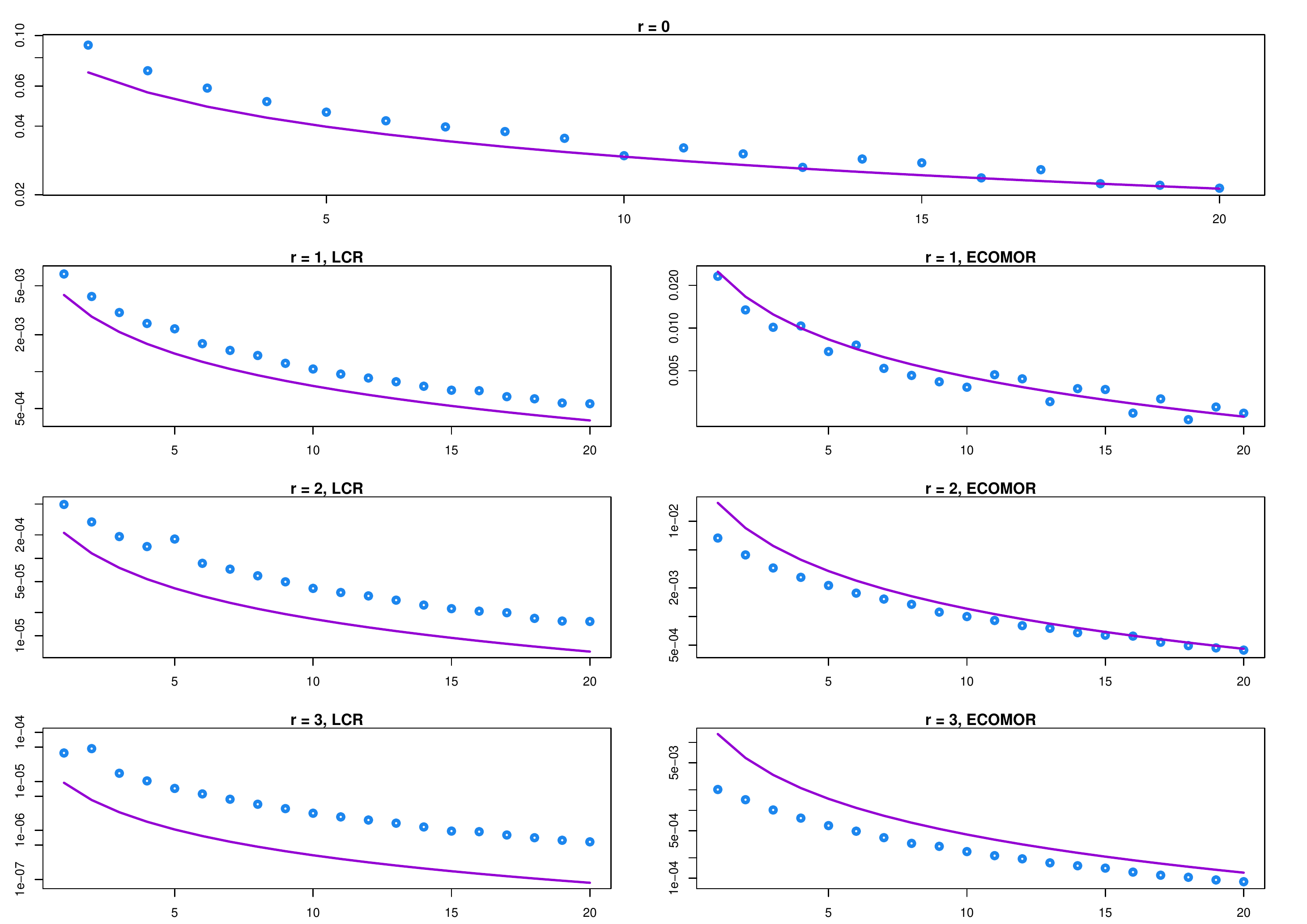}
	\caption{Numerical results for both LCR and ECOMOR treaties, for $\crosslevel=20$.}
	\label{Fig. Numerics1}
\end{figure}

\begin{figure}[h]
	\centering
  \includegraphics[width=0.8\textwidth]{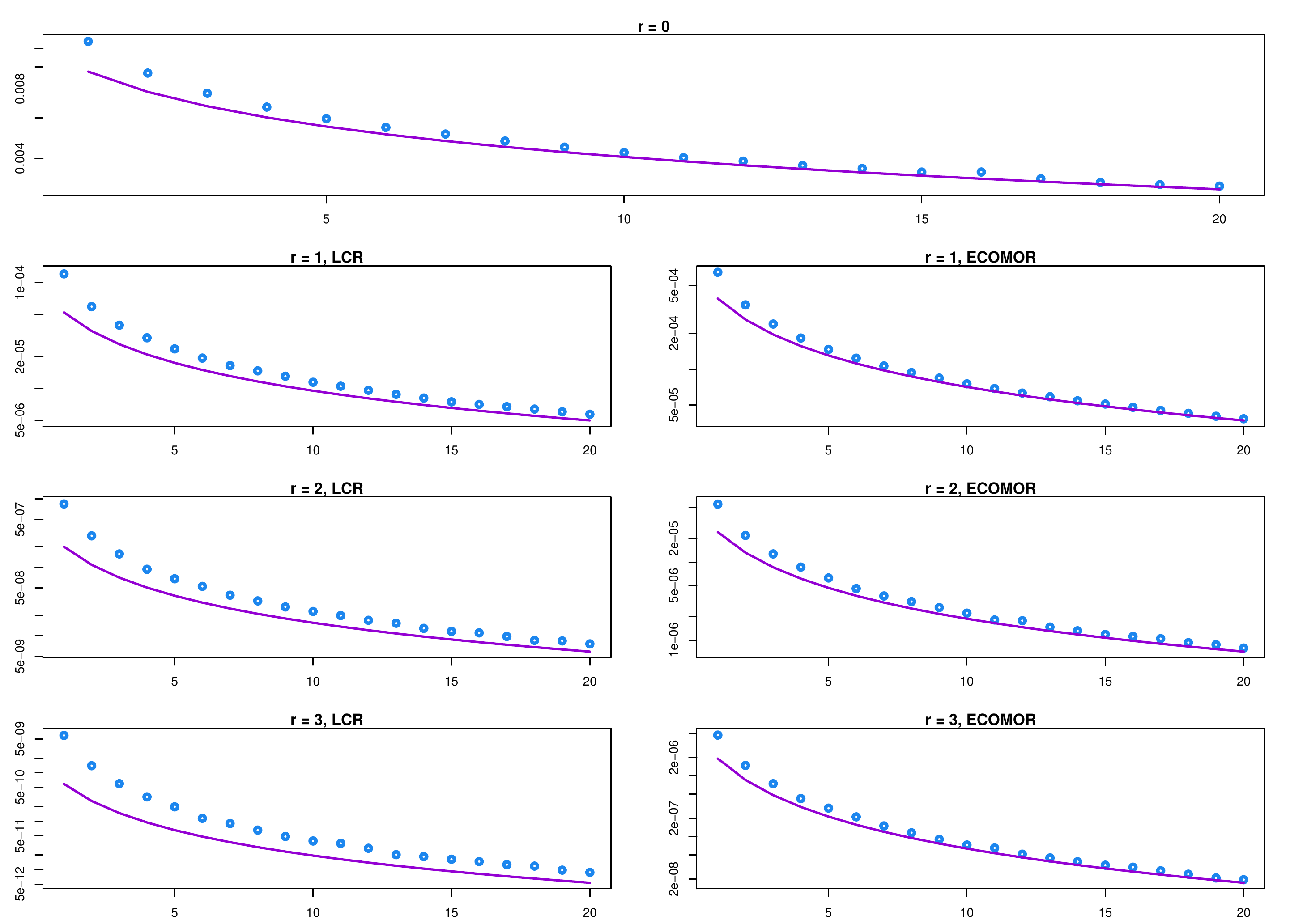}
	\caption{Numerical results for both LCR and ECOMOR treaties, for $\crosslevel=80$.}
	\label{Fig. Numerics2}
\end{figure}

\begin{figure}[ht]
	\centering
  \includegraphics[width=0.8\textwidth]{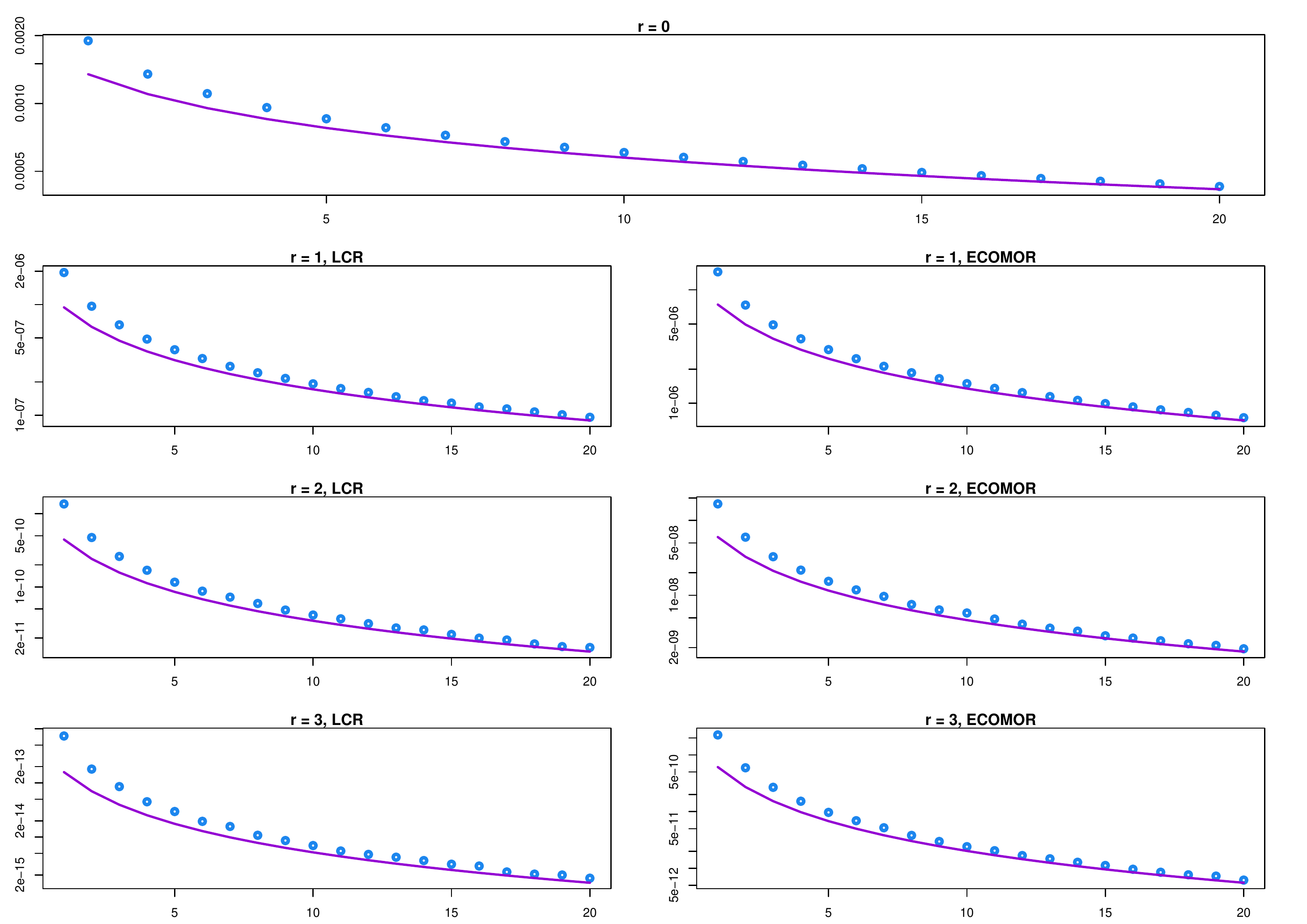}
	\caption{Numerical results for both LCR and ECOMOR treaties, for $\crosslevel=300$.}
	\label{Fig. Numerics3}
\end{figure}

\appendix
\section{Appendix: Short description of the simulation technique}\label{Appendix}

Our simulation estimator is based on an importance sampling strategy; see e.g.\ Chapter~V of \cite{asmussen-SS}.
To be precise, for $\smallnumber>0$, we define the auxiliary set
\begin{equation*}
    B_\smallnumber
    =
    \{ \stepfunctionsymbol\in\skorokhodspace \colon \discontinuitiesset[\stepfunctionsymbol,\smallnumber] \geq \threshold+1 \},
\end{equation*}
where \discontinuitiesset[\stepfunctionsymbol,\smallnumber] is given in \eqref{Eq. definition of set of disc. of magnitude at least epsilon}.
We propose an importance distribution $\mathds Q_{\smallnumber,w}$ that is determined by
\begin{equation*}
    \mathds Q_{\smallnumber,w} (\,\bullet\,)
    =
    w\pr (\,\bullet\,) + (1-w)\mathds Q_{\smallnumber} (\,\bullet\,),
\end{equation*}
where $w\in(0,1)$ and $\mathds Q_{\smallnumber} (\,\bullet\,) = \pr (\,\bullet\, |\, \scaledaggregateclaimssequence \in B_\smallnumber )$.
Note that $\mathds Q_{\smallnumber} (\,\bullet\,)$ is the conditional distribution given the event \scaledaggregateclaimssequence has at least $\threshold+1$ discontinuities of magnitude \smallnumber.
The proposed importance distribution has the following interpretation.
We flip a coin at the beginning of each simulation.
We generate with probability $w$ the sample path of \scaledaggregateclaimssequence under the original measure and with probability $1-w$, we sample \scaledaggregateclaimssequence under the measure $\mathds Q_{\smallnumber} (\,\bullet\,)$.
To compensate for the bias introduced by the importance distribution, a likelihood ratio -- that is the Radon\-/Nikodym derivative between \pr and $\mathds Q_{\smallnumber,w}$ -- must be included in the estimator.
In our case, the estimator $Z_n$ for $\pr(\scaledaggregateclaimssequence \in \randomset )$ is then given by
\begin{equation*}
Z_n
=
\indicatorfunction[\{\scaledaggregateclaimssequence \in \randomset \}] \frac{d\pr}{d\mathds Q_{\smallnumber,w}}
=
\indicatorfunction[\{\scaledaggregateclaimssequence \in \randomset \}]
\left( w+\frac{1-w}{\pr(\scaledaggregateclaimssequence\in B_\smallnumber)}\indicatorfunction[\{\scaledaggregateclaimssequence\in B_\smallnumber\}] \right)^{-1}.
\end{equation*}
The output analysis is performed similarly to the Monte Carlo method, i.e.\ we generate $M$ i.i.d.\ replicates of $Z_n$ from $\mathds Q_{\smallnumber,w}$ and we estimate $\pr(\scaledaggregateclaimssequence\in \randomset )$ as the arithmentic mean of the replicates. From Theorem~1 in \cite{bohan2017efficient}, there exists \smallnumber such that the simulation estimator has a bounded relative error.
Hence, the number of simulation runs required to achieve a given accuracy is bounded as $n$ goes to infinity.
For more details of the estimator, we refer the readers to \cite{bohan2017efficient}.

\section*{Acknowledgements}
H.A.\ and E.V.\ acknowledge financial support from the Swiss National Science Foundation Project 200021\_168993. B.C.\ and B.Z.\ are supported by  NWO VICI grant \# 639.033.413 of the Dutch Science Foundation.
%\newpage

\bibliography{ecomor}

\bibliographystyle{abbrv}

\end{document}